\documentclass[12pt]{article}
\usepackage{amsmath,amssymb,latexsym,color,amsfonts,pdfsync,amsthm}
\usepackage{url,graphicx}

\usepackage{float}
\restylefloat{table}

\newtheorem{theorem}{Theorem}[section]

\newtheorem{lemma}[theorem]{Lemma}

\newtheorem{remark}[theorem]{Remark}
\newtheorem{definition}[theorem]{Definition}

\newcommand{\Proof}{ \noindent{\bf Proof:}\quad }

\def\F{\mathbb{F}}
\def\Fq{\mathbb{F}_q}
\def\PG{\mathrm{PG}}

\def\GL{\mathrm{GL}}

\def\RR{\mathbb{R}}

\setlength{\parindent}{0pt}
\setlength{\parskip}{1ex plus 0.5ex minus 0.2ex}

\begin{document}

\title{Classification of subspaces in ${\mathbb{F}}^2\otimes {\mathbb{F}}^3$ and orbits in ${\mathbb{F}}^2\otimes{\mathbb{F}}^3 \otimes {\mathbb{F}}^r$}
\author{Michel Lavrauw\footnote{The research of the first author was supported by the Fund for Scientific Research - Flanders (FWO) and by a Progetto di Ateneo from Universit\`a di Padova (CPDA113797/11).}
 and John Sheekey\footnote{The second author acknowledges the support of the Fund for Scientific Research - Flanders (FWO).}
}
\date{\today}
\maketitle

\begin{abstract}
This paper contains the classification of the orbits of elements of the tensor product spaces ${\mathbb{F}}^2\otimes{\mathbb{F}}^3 \otimes {\mathbb{F}}^r$, $r\geq 1$, under the action of two natural groups, for all finite; real; and algebraically closed fields. For each of the orbits we determine: a canonical form; the tensor rank; the rank distribution of the contraction spaces; and a geometric description. The proof is based on the study of the contraction spaces in ${\mathrm{PG}}({\mathbb{F}}^2\otimes {\mathbb{F}}^3)$ and is geometric in nature. Although the main focus is on finite fields, the techniques are mostly field independent.\footnote{MSC: 05B25; 05E20; 15A69; 51E20}
\end{abstract}

%%%%%%%%%%%%%%%%%%%%%%%%%%%%%%%%%%%%%%%%%%%%%%%%%%%%%%%%%%%%%%%%%%%%%%%%%%%%%
%%   INSERT TEXT OF ARTICLE                                                %%
%%%%%%%%%%%%%%%%%%%%%%%%%%%%%%%%%%%%%%%%%%%%%%%%%%%%%%%%%%%%%%%%%%%%%%%%%%%%%

\section{Introduction and Motivation}
Tensors are fundamental objects in both algebra and geometry. They also have many important applications, for example in complexity theory \cite{BuClSh1997}, \cite{Blaser2004}, quantum information and entanglement \cite{Gurvits04}, \cite{Heydari2008}, \cite{LaQiYe2012}, and quantum coding \cite{GlGuMaGu2006}. There is a vast literature on various topics in the theory of tensors. See the recent book of Landsberg \cite{Landsberg2012} for details on many of these.

Much of this literature concerns tensors over fields of characteristic zero, and/or algebraically closed fields. However, tensors over finite fields are also of great interest, due for example to their connections to complexity theory, and finite semifields \cite{Lavrauw2012}.

In small dimensions, there has been much work on classifying tensors, mainly over the complex numbers. See for example \cite{ThCh1938}, \cite{Nurmiev2000}, where $3\times 3\times 3$ tensors over the complex numbers are studied.

Deep geometric analysis of tensors over binary fields was carried out in \cite{HaOdSa2012}, \cite{ShGoHa2012}. Recent computational result over some small finite fields can be found in e.g. \cite{BrHu2012}, \cite{BrSt2013}, \cite{ShGoHa2012}. 

It is worth noting that the theory of Weierstrass-Kronecker pencils gives a way to approach the classification of tensors, for example as in \cite{JaJa1979}. However, this approach has some downsides and is not sufficient to complete the classification, as explained in \cite[Section 1.2]{LaSh233}.

In this paper we take an elementary, field-independent approach. We obtain a full classification of subspaces in $\F^2 \otimes \F^3$, and provide tables containing representatives for each orbit. This classification of subspaces is based on the classification of tensors in $\F^2\otimes \F^3\otimes \F^3$, obtained in \cite{LaSh233} using elementary geometric methods, and is independent of any other previously obtained (partial) classification. With this classification at hand, we are also able to enumerate the orbits of tensors in $\F^2 \otimes \F^3\otimes \F^r$ for all $r\geq 1$. Most importantly, our approach gives geometric insight into the nature of the orbits of subspaces and tensors. This geometric insight is very useful in the area of Finite Geometry, and to our knowledge cannot be found anywhere else.

\section{Preliminaries}

This paper follows on from \cite{LaSh233}, where canonical forms of $2\times 3 \times 3$ tensors are obtained, and to which we refer for notation and terminology. Here we restrict ourselves to a brief description of the necessary background for our study. Given a tensor product $V=V_1\otimes V_2\otimes V_3$, we consider two natural groups: the stabiliser $G$ in $\GL(V)$ of the set of fundamental tensors in $V$ and the subgroup $H\leq G$ isomorphic to $\GL(V_1)\times \GL(V_2) \times \GL(V_3)$. The problem is to determine the orbits of $G$ and $H$ on $V$. Previously obtained results for specific fields or concerning related problems can be found in for instance \cite{BrHu2012}, \cite{BrSt2013}, \cite{LaSh2014}, \cite{ShGoHa2012}, \cite{Stavrou2013}.
In \cite{LaSh233}, the following was shown.

\begin{theorem}\label{thm:233}
If $\F$ is a finite field, then there are precisely 21 $H$-orbits and 18 $G$-orbits of tensors in $\F^2 \otimes \F^3 \otimes \F^3$. For any algebraically closed field $\F$, there are precisely 18 $H$-orbits and 15 $G$-orbits of tensors in $\F^2 \otimes \F^3 \otimes \F^3$. There are precisely 20 $H$-orbits and 17 $G$-orbits of tensors in $\RR^2 \otimes \RR^3 \otimes \RR^3$.
\end{theorem}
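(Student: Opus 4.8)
The statement to prove is Theorem~\ref{thm:233}, which counts the $H$- and $G$-orbits of tensors in $\F^2\otimes\F^3\otimes\F^3$ over finite, algebraically closed, and real fields. Since this theorem is attributed to the companion paper \cite{LaSh233}, I would reconstruct the argument along the following lines. The key structural observation is that a tensor $T\in\F^2\otimes\F^3\otimes\F^3$ can be viewed, via its last factor, as a map $\F^{3\vee}\to\F^2\otimes\F^3$, whose image is a subspace of $\F^2\otimes\F^3$ of dimension $0,1,2$ or $3$; the corresponding projective subspace sits inside $\PG(\F^2\otimes\F^3)=\PG(5,\F)$. The group $H\cong\GL_2\times\GL_3\times\GL_3$ acts so that the $\GL_2\times\GL_3$ part acts as $\PGL$ on $\PG(5,\F)$ preserving the Segre variety $\cS_{2,3}=\PG(1,\F)\times\PG(2,\F)$, while the last $\GL_3$ acts by change of coordinates on the source, i.e. it lets us replace the image subspace's defining map freely. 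The upshot is that $H$-orbits of rank-$3$ tensors correspond bijectively to $(\PGL_2\times\PGL_3)$-orbits of planes (projective $2$-spaces) of $\PG(5,\F)$ with respect to the Segre $\cS_{2,3}$, and similarly rank-$\le 2$ tensors reduce to orbits of lines and points, which are classical and easy.

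\emph{First} I would dispose of the degenerate cases: tensors of rank $0$ (one orbit), rank $1$ (points of $\cS_{2,3}$, one orbit), and rank $2$ (lines of $\PG(5,\F)$ classified by their intersection pattern with $\cS_{2,3}$ and with the secant/tangent structure — a short finite list). \emph{Second}, and this is the heart, I would classify the planes $\pi\subset\PG(5,\F)$ under $\PGL_2\times\PGL_3$. The invariants driving this classification are: the dimension and type of $\pi\cap\cS_{2,3}$ (empty; one point; two points; a conic; a line on the Segre; a line meeting it in one or two points; etc.), together with the behaviour of $\pi$ under the two rulings of the Segre (the $\PG(2,\F)$-planes and the $\PG(1,\F)\times\PG(1,\F)$-type lines). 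Equivalently one fixes a basis and writes $T$ as a pencil (or net) of $2\times 3$, or $3\times 3$, matrices, and classifies via rank and Kronecker-type data, being careful — as the excerpt warns — that Weierstrass–Kronecker theory alone does not separate all orbits, so genuinely geometric arguments (e.g.\ counting how many rank-$1$ matrices the net contains, whether they form a conic or a line, whether that locus is $\F$-rational) are needed to finish. \emph{Third}, for each resulting plane-orbit I would write down a canonical tensor, and \emph{fourth}, compute the $G$-orbits by observing that $G/H$ is generated by the "swap" of the two $\F^3$ factors together with the obvious duality, so $G$-orbits are obtained by merging those $H$-orbits that are interchanged by transpose/duality — typically collapsing a handful of pairs, which explains the drops $21\to 18$, $18\to 15$, $20\to 17$.

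\emph{Field dependence.} The three cases differ only in the rationality sub-questions arising in the plane classification. Over an algebraically closed field every conic, every pair of points, every eigenvalue is rational, so several finite-field orbits coalesce (accounting for $21\to 18$). Over $\RR$ one gets exactly the intermediate behaviour — a quadratic form on the relevant pencil may be definite, giving no real rank-$1$ points where over $\overline\RR$ there would be some, but a pair of conjugate points is a single real object — which is why $\RR$ gives $20$ rather than $18$ or $21$. I would handle this by isolating the one or two places in the plane classification where a quadratic/discriminant enters, and treating split vs.\ non-split (resp.\ definite vs.\ indefinite) cases separately, invoking that over finite fields every nondegenerate binary quadratic form is equivalent to $xy$ or to an anisotropic form unique up to scalar, over $\overline\F$ only $xy$ occurs, and over $\RR$ one has $xy$ and $x^2+y^2$.

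\emph{Main obstacle.} The hard part is the plane classification in Step two: showing the list of $(\PGL_2\times\PGL_3)$-orbits of planes of $\PG(5,\F)$ relative to $\cS_{2,3}$ is complete and irredundant, and in particular proving that planes with the same coarse incidence data with the Segre are nonetheless inequivalent when a subtler rational invariant distinguishes them. This is where the purely rank-theoretic (Kronecker) bookkeeping fails and one must argue geometrically, e.g.\ by exhibiting an invariant such as the isomorphism type over $\F$ of the zero-scheme of the discriminant cubic of the net, or by a direct stabiliser computation, to separate the stubborn orbits.
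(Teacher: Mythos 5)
First, a point of reference: this paper does not prove Theorem~\ref{thm:233} at all; it imports it from the companion paper \cite{LaSh233}. What can be inferred about that proof (from the summary tables, where every orbit is described by its \emph{first} contraction space, and from the remark about Weierstrass--Kronecker pencils) is that the classification there is driven by the first contraction spaces: points and lines of $\PG(\F^3\otimes\F^3)$ relative to $S_{3,3}$, i.e.\ pencils of $3\times 3$ matrices, supplemented by geometric arguments where Kronecker theory fails. Your route is the dual one: reduce to $\bar H_2$-orbits of points, lines and planes of $\PG(5,\F)$ relative to $S_{2,3}$ via the third contraction space. That reduction is legitimate (it is exactly \cite[Lemma 2.1]{LaSh233}, used here as Lemma~\ref{lem:orbits_equivalence}), and the arithmetic is consistent with the paper: $1+2+7+11=21$ over $\F_q$, with $o_{10},o_{15},o_{17}$ empty over algebraically closed fields and $o_{17}$ empty over $\RR$, and with $G/H$ generated by the swap of the two $\F^3$ factors identifying $o_4\sim o_4^T$, $o_7\sim o_7^T$, $o_{11}\sim o_{11}^T$. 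Note, though, that this paper deduces the $\PG(5,\F)$ subspace classification \emph{from} the tensor classification, not the other way around, so your plan amounts to proving Theorems~\ref{thm:233} and \ref{thm:pts_lines_planes} simultaneously in the reverse order; the paper's order keeps the hard combinatorial work at the level of pencils (lines) rather than nets (planes).

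The genuine gap is that your proposal stops precisely at the step that constitutes the proof. A statement of the form ``there are precisely 21 orbits'' is proved only by exhibiting the 21 orbits and showing the list is complete and pairwise inequivalent; your Step two says ``I would classify the planes'' and your closing paragraph concedes that completeness and irredundancy of that classification is the hard part, without producing the list of the 11 plane orbits, the separating invariants, or any completeness argument. Two smaller issues: (i) you conflate tensor rank with the dimension of the third contraction space --- e.g.\ $o_4^T$ has tensor rank $2$ but its third contraction space is a single rank-two \emph{point}, while $o_6,o_7,o_{10},o_{11}$ have $2$-dimensional third contraction spaces yet tensor rank $3$, and $o_9$ has a $3$-dimensional contraction space but tensor rank $4$; the stratification you want is by contraction-space dimension, not by rank. (ii) Over an algebraically closed field the discrepancy $21\to 18$ is not that orbits ``coalesce'' but that the orbits $o_{10}$, $o_{15}$, $o_{17}$ are empty, since the defining irreducibility conditions $v\lambda^2+uv\lambda-1\neq 0$ and $\lambda^3+\gamma\lambda^2-\beta\lambda+\alpha\neq 0$ for all $\lambda$ have no solutions. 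Neither of these is fatal, but the missing classification is: as written this is a correct proof strategy, not a proof.
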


Four convenience we include the canonical forms for the 18 $G$-orbits in the finite field case, where we assume that $e_1,e_2,e_3$ form a basis for $\F^3$ and $e=e_1 \otimes e_1+ e_2\otimes e_2 + e_3 \otimes e_3$.

\begin{tabular}{lll}
$o_0$& 0& \\
 $o_1$& $ e_1 \otimes e_1 \otimes e_1 $&\\
$o_2$& $ e_1 \otimes (e_1 \otimes e_1+ e_2\otimes e_2) $&\\
$o_3$& $ e_1 \otimes e $&\\
$o_4$& $ e_1 \otimes e_1 \otimes e_1+ e_2\otimes e_1 \otimes e_2  $&\\
 $o_5$& $ e_1 \otimes e_1 \otimes e_1+ e_2\otimes e_2 \otimes e_2  $&\\
 $o_6$ & $ e_1 \otimes e_1 \otimes e_1+ e_2\otimes (e_1 \otimes e_2 + e_2 \otimes e_1)  $&\\
 $o_7$& $ e_1 \otimes e_1 \otimes e_3+ e_2\otimes (e_1 \otimes e_1 + e_2 \otimes e_2)  $&\\
 $o_8$& $ e_1 \otimes e_1 \otimes e_1+ e_2\otimes (e_2 \otimes e_2 + e_3 \otimes e_3)  $&\\
$o_9$& $ e_1 \otimes e_3 \otimes e_1+ e_2\otimes e  $&\\
$o_{10}$&  $ e_1\otimes (e_1\otimes e_1+ e_2\otimes e_2+u e_1\otimes e_2) +  
e_2\otimes (e_1\otimes e_2+v e_2\otimes e_1),$&\\
 & $v\lambda^2+uv\lambda - 1 \neq 0$ for all
$\lambda \in \F$& \\
$o_{11}$& $ e_1\otimes (e_1 \otimes e_1 + e_2 \otimes e_2)+ 
e_2\otimes (e_1 \otimes e_2 + e_2 \otimes e_3)$&\\
$o_{12}$& $ e_1\otimes (e_1 \otimes e_1 + e_2 \otimes e_2)+ 
e_2\otimes (e_1 \otimes e_3 + e_3 \otimes e_2)$&\\
$o_{13}$& $ e_1\otimes (e_1 \otimes e_1 + e_2 \otimes e_2)+ 
e_2\otimes (e_1 \otimes e_2 + e_3 \otimes e_3)$&\\
$o_{14}$& $ e_1\otimes (e_1 \otimes e_1 + e_2 \otimes e_2)+ e_2\otimes (e_2 \otimes e_2 + e_3 \otimes e_3)$&\\
$o_{15}$&  $ e_1\otimes (e+u e_1\otimes e_2) + 
e_2\otimes (e_1\otimes e_2+v e_2\otimes e_1),$&\\
 & $v\lambda^2+uv\lambda - 1 \neq 0$ for all
$\lambda \in \F$& \\
$o_{16}$& $ e_1\otimes e+ e_2\otimes (e_1 \otimes e_2 + e_2 \otimes e_3)$&\\
$o_{17}$& $ e_1\otimes e+ 
e_2\otimes (e_1\otimes e_2 + e_2\otimes  e_3 + e_3\otimes (\alpha e_1 + \beta e_2 + \gamma e_3)),$ &\\
 & $\lambda^3+\gamma \lambda^2- \beta \lambda+ \alpha \neq 0$ for all $\lambda \in \F$ & \\
\end{tabular}

The three extra $H$-orbits are $o_4^T$, $o_7^{T}$, and $o_{11}^T$, where $T$ is the linear map defined by 
\begin{eqnarray}\label{eqn:transpose}
(a \otimes b \otimes c)^T := a \otimes c \otimes b.
\end{eqnarray}

In this paper we extend this classification result in various directions. Firstly, we determine the orbits on the points, lines and planes in 
$\PG(\F^2 \otimes \F^3)$, including a geometric description and a canonical form for each orbit (see Tables \ref{tab:points}, \ref{tab:lines}, \ref{tab:planes}). In Section \ref{sec:sols+hyps} we determine the orbits of the remaining subspaces in $\PG(\F^2 \otimes \F^3)$, see Table \ref{tab:solids} and Table \ref{tab:hyp}. In Section \ref{sec:summary_r} we extend the classification from \cite{LaSh233} (Theorem \ref{thm:233}) to orbits of tensors in $\F^2 \otimes \F^3\otimes \F^r$, for any $r\geq 1$. Finally in Section \ref{sec:rank} we determine the tensor rank of each orbit in $\F^2 \otimes \F^3\otimes \F^r$.

\bigskip

We end the section with some necessary background.
Let $V=\bigotimes_{i=1}^r V_i$, where $V_1, \ldots, V_r$ are finite dimensional vector spaces over some field $\F$, with $\dim V_i=n_i<\infty$. The set of {\it fundamental tensors} is the set $\{v_1\otimes v_2 \otimes \ldots \otimes v_r :  v_i\in V_i\backslash\{0\}\}$. Projectively, this set corresponds to points on the {\it Segre variety} $S_{n_1,n_2,\ldots,n_r}$, that is the image of a {\it Segre embedding} $\sigma_{n_1,\ldots,n_r}$.
The {\it rank} of a tensor $A$, is defined to be the minimum number $k$ such that there exist fundamental tensors $\alpha_1,\ldots,\alpha_k$ with $A \in \langle \alpha_1,\ldots,\alpha_k\rangle$. 
For $A\in V_1\otimes V_2 \otimes V_3$ we define the {\it first contraction space of $A$} as the subspace 
$A_1:=\langle w_1^\vee(A)~:~w_1^\vee \in V_1^\vee\rangle$ of $V_2\otimes V_3$, where $V_1^\vee$ denotes the dual space of $V_1$, and the {\it contraction} $w_1^\vee(A)$ is defined by its action on the fundamental tensors:
$w_1^\vee(v_1\otimes v_2 \otimes v_3)=w_1^\vee(v_1)v_2\otimes v_3$.
Similarly we define the {\it second} and {\it third contraction space}, and denote these 
by $A_2$ and $A_3$, respectively. We will consider the projective subspaces $\PG(A_i)$ of $\PG(V_j \otimes V_k)$, where $j<k$ and $\{i,j,k\}=\{1,2,3\}$.
The setwise stabilizer of the set of rank one tensors in the contracted space $V_j \otimes V_k$ will be denoted by $G_i$, where $j<k$ and $\{i,j,k\}=\{1,2,3\}$, and the subgroups $\GL(V_j)\times\GL(V_k)$ of $G_i$ by $H_i$. The corresponding projective groups are denoted by $\bar{G}_i$ and $\bar{H}_i$.
The {\it ($i$-th) rank distribution} $r_i(A)$ of a tensor $A$ is defined to be the tuple whose $j$-th entry is the number of rank $j$ points in the $i$-th contraction space $\PG(A_i)$. 
There are two types of lines on the Segre variety $S_{2,3}$: those of the form $\sigma(x \times \ell)$ for a point $x$ in $\PG(\F^2)$ and a line $\ell$ in $\PG(\F^3)$, which we refer to as {\it lines of the first kind}; and those of the form 
$\sigma(\PG(\F^2) \times y)$ for a point $y$ in $\PG(\F^3)$, which we refer to as {\it lines of the second kind}.

\section{Classification of points, lines and planes in $\PG(\F^2 \otimes \F^3$)}
Note that the orbits under $H_2$ on subspaces of $\F^2\otimes \F^3\setminus \{0\}$ are equivalent to the orbits
of the projective group $\bar{H_2}$ induced by $H_2$ on subspaces of the projective space $\PG(\F^2\otimes \F^3)$.
Using the classification of tensors in $\F^2\otimes \F^3\otimes \F^3$, we can classify the points, lines and planes in $\PG(\F^2\otimes \F^3)$ up to ${\bar H}_2$-equivalence, by considering the second and/or third contraction spaces, cf. \cite[Lemma 2.1]{LaSh233}.

\begin{remark}
As we will see, for some of the cases, the second and third contraction spaces of a canonical form for $o_i$ in $\F^2\otimes \F^3 \otimes \F^3$ belong to different $\bar{H}_2$-orbits of subspaces in $\PG(\F^2\otimes\F^3)$. In this case we will represent the second contraction space by $o_i$ and the third by ${\mathbf{o_i^T}}$ in accordance with the notation used in \cite{LaSh233}.
\end{remark}

\subsection{Finite fields}
We first classify the $\bar{H}_2$-orbits of subspaces of $\PG(\F^2\otimes\F^3)$, where $\F$ is a finite field. 
The classification for other fields is based on this case.
\begin{theorem}\label{thm:pts_lines_planes}
If $\F$ is a finite field, then under the action of $\bar{H}_2$, there are 2 orbits of points, 7 orbits of lines and 11 orbits of planes in $\PG(\F^2\otimes \F^3)$. The description is as in Table \ref{tab:points}, Table \ref{tab:lines}, and Table \ref{tab:planes}.
\end{theorem}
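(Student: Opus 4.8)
The plan is to exploit the correspondence between subspaces of $\PG(\F^2\otimes\F^3)$ and tensors in $\F^2\otimes\F^3\otimes\F^r$ recorded in \cite[Lemma 2.1]{LaSh233}: a $k$-dimensional subspace $U\subseteq\F^2\otimes\F^3$ together with a choice of basis determines a tensor in $\F^2\otimes\F^3\otimes\F^{k+1}$ (with $\F^{k+1}$ spanned by a basis of $U$), and two such subspaces lie in the same $\bar H_2$-orbit precisely when the associated tensors in $\F^2\otimes\F^3\otimes\F^{k+1}$ lie in the same orbit under the larger group that also acts on the $\F^{k+1}$ factor. For points we take $k=0$ (so $r=1$); for lines $k=1$ (so $r=2$); for planes $k=2$ (so $r=3$), which is exactly the situation covered by Theorem \ref{thm:233}. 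The idea is therefore to read off the orbits of planes directly from the list $o_0,\dots,o_{17}$ and their transposes, by computing for each canonical form $o_i$ the second contraction space $A_2$ (a subspace of $\F^3\otimes\F^3$ — wait, rather of $\F^2\otimes\F^3$; here $V_1=\F^2$ is contracted so $A_2=\langle w_2^\vee(A)\rangle\subseteq\F^2\otimes\F^3$), and similarly $A_3$, and determining which of these coincide up to $\bar H_2$-equivalence.

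The detailed steps: (1) For points ($r=1$): a point of $\PG(\F^2\otimes\F^3)$ is a rank-one or rank-two tensor in $\F^2\otimes\F^3$, giving exactly the two orbits (on the Segre variety $S_{2,3}$, and off it), with canonical forms $e_1\otimes e_1$ and $e_1\otimes e_1+e_2\otimes e_2$; here I would just quote the elementary classification of $2\times3$ matrices up to $\GL_2\times\GL_3$ by rank. (2) For lines ($r=2$): use the classification of tensors in $\F^2\otimes\F^3\otimes\F^2$ — which I would obtain as a sub-case of Theorem \ref{thm:233} by restricting to tensors whose $\F^2$-analogue... more precisely, a line in $\PG(\F^2\otimes\F^3)$ is spanned by two points, hence corresponds to a $2\times3\times2$ tensor up to the appropriate group action; I enumerate these, checking which come from distinct $\bar H_2$-orbits of lines, using the line-type dichotomy (first kind / second kind) on $S_{2,3}$ together with secant/external/tangent position relative to $S_{2,3}$, and the rank distribution $r_2$ as an invariant to separate orbits. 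I expect $7$ orbits. (3) For planes ($r=3$): run through $o_0,\dots,o_{17},o_4^T,o_7^T,o_{11}^T$; for each, compute $\dim A_2$ (keeping only those with $\dim\PG(A_2)=2$, i.e. those whose second contraction space is genuinely a plane) and record the invariants — rank distribution $r_2(A)$, the geometric description (how $\PG(A_2)$ meets $S_{2,3}$: a conic, a line plus a point, three points, a pencil structure, etc.), and the tensor rank. Then merge orbits that give the same plane up to $\bar H_2$, and separate those the invariants distinguish. The Remark signals that for some $o_i$ the spaces $A_2$ and $A_3$ fall in different $\bar H_2$-orbits, so both must be listed (with notation $o_i$ and $o_i^T$); this bookkeeping has to be done carefully. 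Finally, assemble Tables \ref{tab:points}, \ref{tab:lines}, \ref{tab:planes}.

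The main obstacle is twofold. First, completeness: one must be sure that every plane of $\PG(\F^2\otimes\F^3)$ arises as a contraction space of some $o_i$ — this follows because any plane, equipped with a basis, is a $2\times3\times3$ tensor, and Theorem \ref{thm:233} classifies all of these; but one has to argue that changing the basis of the plane corresponds exactly to the action on the $\F^3$ factor, so that distinct $\bar H_2$-orbits of planes correspond bijectively to those $G$-orbits (or $H$-orbits) of $2\times3\times3$ tensors whose contraction space is $3$-dimensional, modulo the extra identification coming from choosing $A_2$ versus $A_3$. Second, the merging/separating step for the $o_{10},o_{15},o_{17}$ families, which carry field-dependent parameters $(u,v)$ or $(\alpha,\beta,\gamma)$ constrained by non-existence of roots of a polynomial: here I must verify that, at the level of the contraction \emph{space} (a plane, forgetting the chosen basis), all admissible parameter values yield a single $\bar H_2$-orbit, even though the corresponding tensors may lie in several $H$-orbits. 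The rank distribution and the combinatorial incidence pattern of the plane with $S_{2,3}$ should be enough to pin down each orbit, but confirming that these invariants are complete — that no two of the $11$ candidate planes share all invariants while being inequivalent, and conversely — is the technical heart of the argument and will require the case-by-case computation underlying the tables.
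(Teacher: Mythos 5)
Your proposal follows essentially the same route as the paper: both reduce the classification of points, lines and planes to the classification of $2\times 3\times 3$ tensors (Theorem \ref{thm:233}) via \cite[Lemma 2.1]{LaSh233}, and then read off each orbit by computing the second and third contraction spaces of the canonical forms $o_1,\dots,o_{17}$, recording how each meets $S_{2,3}$ and tracking which pairs $\PG(A_2)$, $\PG(A_3)$ are $\bar{H}_2$-equivalent (yielding the extra orbits $o_4^T$, $o_7^T$, $o_{11}^T$). The substance of the paper's proof is exactly the case-by-case computation and geometric identification that you defer as ``the technical heart,'' so what is missing from your outline is only its execution, not any idea.
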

\Proof For each $A$ in the list of canonical forms in the classification of $2\times 3 \times 3$ tensors (Theorem \ref{thm:233}) we take the second contraction space $\PG(A_2)$ and third contraction space $\PG(A_3)$. It is clear that there are two orbits of points in $\PG(\F^2\otimes\F^3)$: 
points of rank one and points of rank two.
Both the second and third contraction space of the canonical form $A= e_1 \otimes e_1 \otimes e_1$ of ${\mathbf{o_1}}$ gives $\langle e_1\otimes e_1\rangle$, which amounts to a point of rank one. The points of rank two are represented by the second contraction space of the canonical form of the orbit $o_4$. Note that the third contraction space of $A$ in ${\mathbf{o_4}}$ gives $\langle e_1\otimes e_1, e_2\otimes e_1\rangle$, corresponding to a line of the first kind on $S_{2,3}$. As mentioned above, we represent this orbit by ${\mathbf{o_4^T}}$.

The second and third contraction spaces of the canonical form of ${\mathbf{o_2}}$ both give  $\langle e_1 \otimes e_1, e_1\otimes e_2\rangle$, which gives a line of the second kind on the Segre variety $S_{2,3}$.

For the orbit ${\mathbf{o_3}}$ we get $\langle e_1 \otimes e_1, e_1\otimes e_2,e_1\otimes e_3\rangle$
giving a plane of the second kind on $S_{2,3}$.

The next orbits corresponding to canonical forms for ${\mathbf{o_5}}$, ${\mathbf{o_6}}$, and ${\mathbf{o_7}}$ are represented by 
$\langle e_1\otimes e_1,e_2\otimes e_2\rangle$,
$\langle e_1\otimes e_1 + e_2\otimes e_2, e_2\otimes e_1\rangle$, and 
$\langle e_1\otimes e_3 + e_2 \otimes e_1, e_2\otimes e_2\rangle$, which give
a secant line contained in an $\langle S_{2,2}\rangle$, a tangent line contained in an $\langle S_{2,2}\rangle$ and
a tangent line not contained in an $\langle S_{2,2}\rangle$, respectively.

Note that the third contraction spaces of $o_5$ and $o_6$ give the same orbits as the corresponding second contraction spaces, while the third contraction space of the canonical form for $o_7$ gives 
$\langle e_2\otimes e_1,e_2\otimes e_2,e_1\otimes e_1\rangle$ which amounts to a plane contained in $\langle S_{2,2}\rangle$, and intersecting $S_{2,2}$ in a line of the first kind and a line of the second kind. This orbit is represented by ${\mathbf{o_7^T}}$.

The second and third contraction spaces for ${\mathbf{o_8}}$ both give 
$\langle e_1\otimes e_1,e_2\otimes e_2,e_2\otimes e_3\rangle$, inducing a plane intersecting $S_{2,3}$ in a line of the second kind $\sigma_{2,3}(\langle e_2\rangle \times \langle e_2,e_3\rangle)$ and a point $\langle e_1 \otimes e_1\rangle$. Note that this plane is not contained in an $\langle S_{2,2}\rangle$.

The second contraction space for ${\mathbf{o_9}}$ gives
$\langle e_2\otimes e_1,e_2\otimes e_2,e_1\otimes e_1+ e_2\otimes e_3\rangle$ giving a plane intersecting $S_{2,3}$ in a line of the second kind. Again this plane is not contained in an $\langle S_{2,2}\rangle$.
The third contraction space gives $\langle e_2\otimes e_3,e_2\otimes e_2,e_1\otimes e_3+ e_2\otimes e_1\rangle$, which is different but equivalent under the element $(g,h)\in H_2$, where $g$ is the identity and $h$ is the basis transformation $e_1\mapsto e_3$, $e_2\mapsto e_2$, and $e_3\mapsto e_1$ in the second factor.

Both the second and the third contraction spaces of ${\mathbf{o_{10}}}$ give a constant rank two line in $\PG(\F^2\otimes\F^3)$ contained in an $\langle S_{2,2}\rangle$. There is one such orbit of lines under the action of $H_2$, since there is one orbit of subvarieties $S_{2,2}$ and these lines correspond to the (unique) orbit of nonsingular points in the Segre variety product of three projective lines (see \cite{LaSh2014}).

The second contraction space for ${\mathbf{o_{11}}}$ gives
$\langle e_1\otimes e_1 + e_2\otimes e_2,e_1\otimes e_2+e_2\otimes e_3\rangle$
inducing a constant rank two line not contained in an $\langle S_{2,2}\rangle$, while the third contraction space
gives the plane defined by
$\langle e_1\otimes e_1,e_1\otimes e_2+e_2\otimes e_1,e_2\otimes e_2\rangle$. This plane is contained in an $\langle S_{2,2}\rangle$ and intersects $S_{2,2}$ in a conic. The corresponding orbit is denoted by ${\mathbf{o_{11}}^T}$.

From the canonical form $A$ for ${\mathbf{o_{12}}}$ we obtain 
$A_2=\langle e_1\otimes e_1+ e_2\otimes e_3,e_1\otimes e_2,e_2\otimes e_2\rangle$. Then
$\PG(A_2)$ is a plane not contained in an $\langle S_{2,2}\rangle$ and intersecting $S_{2,3}$ in a  line of the first kind. The third contraction space is 
$A_3=\langle e_1\otimes e_1, e_1\otimes e_2+e_2\otimes e_3,e_2\otimes e_1\rangle$. 
This is equivalent to the second contraction space under the element $(g,h)\in H_2$, where $g$ is the identity and $h$ is the basis transformation $e_1\mapsto e_2$, $e_2\mapsto e_1$, and $e_3\mapsto e_3$ in the second factor.

The second contraction space for ${\mathbf{o_{13}}}$ gives 
$A_2=\langle e_1\otimes e_1+e_2\otimes e_2,e_1\otimes e_2,e_2\otimes e_3\rangle$, inducing the plane $\PG(A_2)$ not contained in an $\langle S_{2,2}\rangle$, and intersecting $S_{2,3}$ in exactly two points. Again the third contraction space is equivalent to the second contraction space under the element $(g,h)\in H_2$, where $g$ is the identity and $h$ is the basis transformation $e_1\mapsto e_2$, $e_2\mapsto e_1$, and $e_3\mapsto e_3$ in the second factor.

Both the second and the third contraction spaces of the canonical form $A$ for ${\mathbf{o_{14}}}$ give the plane $\PG(A_2)$, defined by $A_2=\langle e_1\otimes e_1,(e_1+e_2)\otimes e_2,e_2\otimes e_3\rangle$, which is not contained in an $\langle S_{2,2}\rangle$ and intersects $S_{2,3}$
in exactly three points.

For ${\mathbf{o_{15}}}$ the canonical form $A$ gives the second contraction space
$A_2=\langle e_1\otimes (e_1+u e_2)+ e_2\otimes e_2,e_1\otimes e_2+ve_2\otimes e_1,e_1\otimes e_3\rangle$. This corresponds to a plane $\PG(A_2)$ intersecting $S_{2,3}$ in exactly one point.
For ${\mathbf{o_{16}}}$ the canonical form $A$ gives the second contraction space
$A_2=\langle e_1\otimes e_1+ e_2\otimes e_2,e_1\otimes e_2+e_2\otimes e_3,e_1\otimes e_3\rangle$, again amounting to a plane intersecting $S_{2,3}$ in exactly one point. Both of these are equivalent to the third contraction space of the corresponding orbit by 
the element $(g,h)\in H_2$, where $g$ is the identity and $h$ is the appropriate basis transformation in the second factor space ($e_1\mapsto e_2$, $e_2\mapsto e_1$, $e_3\mapsto e_3$ for $o_{15}$ and $e_1\mapsto e_3$, $e_2\mapsto e_3$, $e_3\mapsto e_1$ for $o_{16}$).

Both planes contain a unique line $l$ for which it holds that for each rank two point $y$ on $l$, the solid $\langle Q(y)\rangle$ (see \cite[Lemma 2.4]{LaSh233}) meets the plane in $l$. Moreover, for each other rank two point $y$ in the plane, the solid $\langle Q(y)\rangle$ meets the plane in $y$.
The geometric characterisation of these two planes is then that for $o_{15}$, this line is a line of type $o_{10}$ (and hence does not go through the unique point of rank one in that plane), while for $o_{16}$, this line is of type $o_6$ and contains the unique point of rank one in that plane.

Finally for ${\mathbf{o_{17}}}$, we recall that the first contraction space gives a line $\PG(A_1)$ which has no singular points. In other words, for each contraction $w_1^\vee \neq 0$ in the first factor, $w_1^\vee(A)$ is a nonsingular $3 \times 3$ tensor. Equivalently, for $i\in \{2,3\}$, and  for each 
two nonzero contractions $w_1^\vee \in V_1^\vee$, $w_i^\vee \in V_i^\vee$
the double contraction $w_i^\vee(w_1^\vee(A))$ is a nonzero vector. 
Since $w_i^\vee(w_1^\vee(A))=w_1^\vee(w_i^\vee(A))$, this implies that each $w_i^\vee(A)$ must have rank two.
It follows that both the second and third contraction space
 gives a constant rank two plane.
 \qed
 
The $\bar{H}_2$-orbits of points, lines and planes in $\PG(\F^2\otimes \F^3)$ for $\F=\F_q$ are summarised in the following tables. The dimensions of the minimal Segre variety spanning a subspace containing a representative of the orbit is given in the third column, e.g. $2\times 1$ means that there exists a variety $X\cong S_{2,1}$ contained in the Segre variety $S_{2,3}$, such that the representative of the orbit is contained in $\langle X \rangle$. 
The last column represents the subspace.

\begin{table}[h]
\begin{center}
\begin{tabular}{|c|c|c|c|}
\hline
{\bf Orbit} & {\bf Intersection with} $S_{2,3}$ &{\bf Min}%& $\mathbf{r_2(A)}$ 
& {\bf Rep}\\
\hline
$o_1$ & a point &$1 \times 1$%&$[1,0]$ 
& {\tiny{$\left [ \begin{array}{ccc} 1& \cdot & \cdot \\ \cdot& \cdot & \cdot\end{array}\right ]$}}\\ 
\hline
$o_4$ & $\emptyset$ &$1 \times 2$%&$[0,1]$ 
& {\tiny{$\left [ \begin{array}{ccc} 1& \cdot & \cdot \\ \cdot& 1 & \cdot \end{array}\right ]$}}\\ 
\hline
\end{tabular}
\caption{The two $\bar{H}_2$-orbits of points in $\PG(\F^2\otimes\F^3)$ for $\F=\F_q$.}
\label{tab:points}
\end{center}
\end{table}

\begin{table}[h]
\begin{center}
\begin{tabular}{|c|c|c|c|}
\hline
{\bf Orbit} & {\bf Intersection with} $S_{2,3}$ &{\bf Min}%& $\mathbf{r_2(A)}$ 
& {\bf Rep}\\
\hline
$o_2$ & line of the second kind &$1 \times 2$ %&$[q+1,0]$
& {\tiny{$\left [ \begin{array}{ccc} x& y & \cdot\\ \cdot& \cdot & \cdot\end{array}\right ] $}}\\ 
\hline
$o_4^T$ & line of the first kind  &$2 \times 1$%&$[q+1,0]$
& {\tiny{$\left [ \begin{array}{ccc} x& \cdot & \cdot \\y& \cdot & \cdot \end{array}\right ] $}}\\ 
\hline
$o_5$ & 2 points &$2 \times 2$%&$[2,q-1]$ 
& {\tiny{$\left [ \begin{array}{ccc} x& \cdot & \cdot \\ \cdot& y & \cdot \end{array}\right ] $}}\\ 
\hline
$o_6$ & 1 point &$2 \times 2$%&$[1,q]$
& {\tiny{$\left [ \begin{array}{ccc} x& \cdot & \cdot \\ y& x & \cdot \end{array}\right ] $}}\\ 
\hline
$o_7$ & 1 point &$2 \times 3$%&$[1,q]$
& {\tiny{$\left [ \begin{array}{ccc} x& y & \cdot \\ \cdot & \cdot  & x \end{array}\right ] $}}\\ 
\hline
$o_{10}$ & $\emptyset$ &$2 \times 2$%&$[0,q+1]$ 
& {\tiny{$\left [ \begin{array}{ccc} x& ux+y & \cdot \\ vy & x & \cdot  \end{array}\right ] $}}\\ 
 &&&{\tiny{$v\lambda^2+uv\lambda - 1 \neq 0, \forall \lambda \in \F$}}\\
\hline
$o_{11}$ &$\emptyset$ &$2 \times 3$%&$[0,q+1]$
& {\tiny{$\left [ \begin{array}{ccc} x& y & \cdot \\ \cdot & x  & y \end{array}\right ] $}}\\ 
\hline
\end{tabular}
\caption{The seven $\bar{H}_2$-orbits of lines in $\PG(\F^2\otimes\F^3)$  for $\F=\F_q$}
\label{tab:lines}
\end{center}
\end{table}

\begin{table}[H]
\begin{center}
\begin{tabular}{|c|c|c|c|}
\hline
{\bf Orbit} & {\bf Intersection with} $S_{2,3}$ &{\bf Min} %& $\mathbf{r_2(A)}$
& {\bf Rep}\\
\hline
$o_3$ & a plane  &$1 \times 3$ %&$[q^2+q+1,0]$
& {\tiny{$\left [ \begin{array}{ccc} x& y & z \\ \cdot & \cdot & \cdot \end{array}\right ] $}}\\ 
\hline
$o_7^T$ & 2 lines &$2 \times 2$ %&$[2q+1,q^2-q]$
& {\tiny{$\left [ \begin{array}{ccc} z& \cdot & \cdot \\x& y & \cdot \end{array}\right ] $}}\\ 
\hline
$o_8$ & a line and a point &$2 \times 3$ %&$[q+2,q^2-1]$
& {\tiny{$\left [ \begin{array}{ccc} x& \cdot & \cdot \\ \cdot & y & z \end{array}\right ] $}}\\ 
\hline
$o_9$ & a line of the second kind &$2 \times 3$ %&$[q+1,q^2]$
& {\tiny{$\left [ \begin{array}{ccc} z& \cdot & \cdot \\ x & y & z \end{array}\right ] $}}\\ 
\hline
$o_{11}^T$ &  a conic&$2 \times 2$ %&$[q+1,q^2]$
& {\tiny{$\left [ \begin{array}{ccc} x& y & \cdot \\ y & z & \cdot \end{array}\right ] $}}\\ 
\hline
$o_{12}$ & a line of the first kind &$2 \times 3$ %&$[q+1,q^2]$
& {\tiny{$\left [ \begin{array}{ccc} x& y & \cdot \\ \cdot & z & x \end{array}\right ] $}}\\ 
\hline
$o_{13}$ & two points &$2 \times 3$ %&$[2,q^2+q-1]$ 
& {\tiny{$\left [ \begin{array}{ccc} x& y & \cdot \\ \cdot & x & z \end{array}\right ] $}}\\ 
\hline
$o_{14}$ & three points &$2 \times 3$ %&$[3,q^2+q-2]$
& {\tiny{$\left [ \begin{array}{ccc} x& y & \cdot \\ \cdot & y & z \end{array}\right ] $}}\\ 
\hline
$o_{15}$ & one point  &$2 \times 3$ %&$[1,q^2+q]$
& {\tiny{$\left [ \begin{array}{ccc} x& ux+y & z \\ vy & x & \cdot \end{array}\right ] $}}\\ 
&(contains line of $o_{10}$)&&{\tiny{$v\lambda^2+uv\lambda - 1 \neq 0, \forall \lambda \in \F$}}\\
\hline
$o_{16}$ &  one point, &$2 \times 3$ %&$[1,q^2+q]$
& {\tiny{$\left [ \begin{array}{ccc} x& y & z \\ \cdot & x & y \end{array}\right ] $}}\\ 
&(contains line of $o_{11}$)&&\\
\hline
$o_{17}$ &$\emptyset$ &$2 \times 3$ %&$[0,q^2+q+1]$
& {\tiny{$\left [ \begin{array}{ccc} x& y & z \\ \alpha z & x+ \beta z & y+\gamma z \end{array}\right ] $}}\\ 
&&&{\tiny{$\lambda^3+\gamma \lambda^2- \beta \lambda+ \alpha \neq 0, \forall \lambda \in \F$}}\\

\hline
\end{tabular}
\caption{The eleven $\bar{H}_2$-orbits of planes in $\PG(\F^2\otimes\F^3)$  for $\F=\F_q$.}
\label{tab:planes}
\end{center}
\end{table}

\subsection{Algebraically closed fields and the real numbers}\label{subsec:alg_closed}

In this section the classification of points, lines and planes in $\PG(\F^2\otimes \F^3)$ is given for the real field and for algebraically closed fields. For other fields see \cite[Remark 3.2]{LaSh233}.

\begin{theorem}
If $\F$ is an algebraically closed field, then under the action of $\bar{H}_2$, there are 2 orbits of points, 6 orbits of lines and 9 orbits of planes in $\PG(\F^2\otimes \F^3)$. The description is as in Table \ref{tab:points}, Table \ref{tab:lines}, and Table \ref{tab:planes}, where the orbits $o_{10}$, $o_{15}$ and $o_{17}$ do not occur.
\end{theorem}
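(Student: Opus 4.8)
The plan is to reduce the statement over an algebraically closed field $\F$ to the finite-field case already established in Theorem~\ref{thm:pts_lines_planes}. The key observation is that the method of proof of Theorem~\ref{thm:pts_lines_planes} — take each canonical form $A$ of a $2\times 3\times 3$ tensor from Theorem~\ref{thm:233}, compute $\PG(A_2)$ and $\PG(A_3)$, and read off the $\bar H_2$-orbit of the resulting subspace — is entirely field-independent in its geometric content. Over an algebraically closed field, Theorem~\ref{thm:233} gives 15 $G$-orbits rather than 18, and inspection of the canonical forms shows that exactly the orbits $o_{10}$, $o_{15}$, $o_{17}$ are the ones that fail to occur: each of these is defined by a non-existence condition on roots of a polynomial over $\F$ (respectively $v\lambda^2+uv\lambda-1\neq 0$, the same, and $\lambda^3+\gamma\lambda^2-\beta\lambda+\alpha\neq 0$ for all $\lambda\in\F$), which is vacuous — hence impossible to satisfy — when $\F$ is algebraically closed. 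So the first step is simply to note that the list of contraction spaces in the proof of Theorem~\ref{thm:pts_lines_planes} collapses to the $15$ orbits indexed by $o_i$ with $i\notin\{10,15,17\}$, together with the ``transpose'' orbits $o_4^T$, $o_7^T$, $o_{11}^T$.

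The second step is bookkeeping: among the $2+7+11$ entries of Tables~\ref{tab:points}, \ref{tab:lines}, \ref{tab:planes}, deleting the rows for $o_{10}$ (a line), $o_{15}$ (a plane), $o_{17}$ (a plane) leaves exactly $2$ point-orbits, $6$ line-orbits, and $9$ plane-orbits, matching the claimed counts. One must check that no two of the surviving orbits merge over an algebraically closed field. For this I would re-examine the geometric invariants recorded in the ``Intersection with $S_{2,3}$'' and ``Min'' columns: these invariants (number and type of points/lines of $S_{2,3}$ in the subspace, the dimension of the minimal Segre variety whose span contains a representative, whether a constant-rank-two line is contained in an $\langle S_{2,2}\rangle$, and the $o_{15}$-versus-$o_{16}$ distinction via the line $l$ of fixed points of the $\langle Q(y)\rangle$-construction) are all preserved by $\bar H_2$ and are pairwise distinct across the surviving rows, exactly as in the finite-field argument. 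Since these distinctions were already shown to be genuine over an arbitrary field in the proof of Theorem~\ref{thm:pts_lines_planes}, the surviving orbits remain distinct.

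The third step is to confirm that no \emph{new} orbit appears and that nothing which was genuinely a union of $\bar H_2$-orbits over $\F_q$ for arithmetic reasons becomes relevant here — but this cannot happen, because every $\bar H_2$-orbit of a subspace of $\PG(\F^2\otimes\F^3)$ is represented by the second or third contraction space of some $2\times 3\times 3$ tensor (this is the content of \cite[Lemma~2.1]{LaSh233}, which is field-independent), and the full list of such tensors over an algebraically closed field is precisely the $15$-orbit list. Thus the orbit set of $\bar H_2$ on points, lines, planes is obtained from the finite-field list by deleting exactly $o_{10}, o_{15}, o_{17}$, which is the assertion.

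I expect the main obstacle to be the verification in the second step that the orbits $o_{15}$ and $o_{16}$ (and similarly the pair of constant-rank-two lines $o_{10}$, $o_{11}$ at the line level, though $o_{10}$ is being deleted) are not identified over an algebraically closed field: one must be certain that the geometric separating invariant used in the finite-field proof — the type ($o_{10}$ versus $o_6$) of the distinguished line $l$ inside the plane, and in particular whether $l$ passes through the unique rank-one point of the plane — is a bona fide $\bar H_2$-invariant that does not degenerate when $\F$ is algebraically closed. Since $o_{16}$ survives while $o_{10}$ does not, one should double-check that the plane of type $o_{16}$ still exists (its canonical form has no arithmetic side condition, so it does) and that its distinguished line, of type $o_6$, is still well-defined and still passes through the rank-one point; this is a direct consequence of the field-independent computation already carried out for Theorem~\ref{thm:pts_lines_planes}. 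No genuinely new calculation is needed.
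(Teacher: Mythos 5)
Your proposal is correct and follows essentially the same route as the paper: the orbits $o_{10}$, $o_{15}$, $o_{17}$ are eliminated because their defining polynomial conditions cannot be met over an algebraically closed field, and the remaining arguments from the finite-field proof of Theorem~\ref{thm:pts_lines_planes} carry over unchanged because they are field-independent. Your additional checks (that no surviving orbits merge and no new orbits appear) are exactly what the paper subsumes under the remark that the other arguments are field-independent.
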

\Proof
Since $\F$ is algebraically closed, there do not exist $u,v,\alpha,\beta,\gamma \in \F$ satisfying the necessary conditions for the orbits $o_{10}$, $o_{15}$
($v\lambda^2+uv\lambda - 1 \neq 0, \forall \lambda \in \F$) and for $o_{17}$
($\lambda^3+\gamma \lambda^2- \beta \lambda+ \alpha \neq 0, \forall \lambda \in \F$).
All the other arguments used in the proof of Theorem \ref{thm:pts_lines_planes} are field independent.\qed

\begin{theorem}
If $\F$ is the field of real numbers, then there are 2 orbits of points, 7 orbits of lines and 10 orbits of planes in $\PG(\F^2\otimes \F^3)$. The description is as in Table \ref{tab:points}, Table \ref{tab:lines}, and Table \ref{tab:planes}, where the orbit $o_{17}$ does not occur.
\end{theorem}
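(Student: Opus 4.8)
The plan is to mimic the proof of the algebraically closed case, exploiting the fact that over $\RR$ the only obstruction among $o_{10},o_{15},o_{17}$ that disappears is the one associated with $o_{17}$. First I would recall that in the proof of Theorem \ref{thm:pts_lines_planes} every geometric argument identifying the $\bar{H}_2$-orbit of a contraction space $\PG(A_i)$ with a given entry in Tables \ref{tab:points}, \ref{tab:lines}, \ref{tab:planes} is field independent; the only places where the base field enters are the side conditions attached to $o_{10}$, $o_{15}$ and $o_{17}$, namely $v\lambda^2+uv\lambda-1\neq 0$ for all $\lambda\in\F$ (for $o_{10}$ and $o_{15}$) and $\lambda^3+\gamma\lambda^2-\beta\lambda+\alpha\neq 0$ for all $\lambda\in\F$ (for $o_{17}$).

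The key observation is then purely about polynomials over $\RR$. A monic cubic $\lambda^3+\gamma\lambda^2-\beta\lambda+\alpha$ with real coefficients always has a real root, since it changes sign between $-\infty$ and $+\infty$ (equivalently, odd-degree real polynomials have real roots by the intermediate value theorem). Hence there are no $\alpha,\beta,\gamma\in\RR$ satisfying the defining condition of $o_{17}$, so exactly as in the algebraically closed case the orbit $o_{17}$ does not occur over $\RR$. By contrast, the quadratic $v\lambda^2+uv\lambda-1$ can fail to have a real root: for instance with $u=0$ and $v=-1$ it equals $-\lambda^2-1$, which is negative for all $\lambda\in\RR$, so the side condition for $o_{10}$ (and likewise for $o_{15}$) is satisfiable over $\RR$. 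Therefore $o_{10}$ and $o_{15}$ do persist as genuine orbits over the reals.

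Combining these two points: starting from the finite-field count of $2$ orbits of points, $7$ of lines and $11$ of planes, passing to $\RR$ we lose only the single plane orbit $o_{17}$, while keeping all seven line orbits (including $o_{10}$) and all the remaining plane orbits (including $o_{10}$ and $o_{15}$). This yields $2$ orbits of points, $7$ of lines and $10$ of planes, with the descriptions and representatives read off verbatim from Tables \ref{tab:points}, \ref{tab:lines} and \ref{tab:planes} with the row $o_{17}$ removed, which is precisely the statement.

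I do not expect any serious obstacle here: the entire content is the elementary remark that odd-degree real polynomials have roots (killing $o_{17}$) whereas definite quadratics exist over $\RR$ (sparing $o_{10}$ and $o_{15}$), together with the already-established field-independence of all other arguments in the proof of Theorem \ref{thm:pts_lines_planes}. The only thing to be careful about is to confirm that no two of the surviving orbits over $\RR$ accidentally merge — but since $\bar{H}_2$-inequivalence of the listed representatives was established over an arbitrary field in Theorem \ref{thm:pts_lines_planes} (the invariants used, such as the intersection pattern with $S_{2,3}$ and the dimension of the minimal spanning Segre variety, are field independent), no such merging can happen, and the count is exactly as claimed.
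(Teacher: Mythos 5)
Your proposal is correct and follows essentially the same route as the paper: the orbit $o_{17}$ is excluded because a real cubic always has a real root, while all other arguments in the proof of Theorem \ref{thm:pts_lines_planes} carry over to $\RR$ unchanged. Your additional check that the quadratic condition for $o_{10}$ and $o_{15}$ is satisfiable over $\RR$ (e.g.\ $u=0$, $v=-1$) is a worthwhile explicit verification that the paper leaves implicit.
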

\Proof
The orbit $o_{17}$ does not occur since a polynomial 
$\lambda^3+\gamma \lambda^2- \beta \lambda+ \alpha
\in {\mathbb{R}}[\lambda]$ always has a real root.
All the other arguments used in the proof of Theorem \ref{thm:pts_lines_planes} are valid for the real field.\qed

\section{Classification of solids and hyperplanes in $\PG(\F^2 \otimes \F^3)$}\label{sec:sols+hyps}

The classification of points, lines and planes of $\PG(\F^2 \otimes \F^3)$ allows us to classify all subspaces of $\PG(\F^2 \otimes \F^3)$ due to the following lemmas, which hold in any tensor product space $V_1\otimes V_2$,
where $V_1$, $V_2$ are two finite dimensional vectors spaces over a field $\F$. Let $K = \GL(V_1)\times \GL(V_2)$ and let $\beta_1,\beta_2$ be nondegenerate symmetric bilinear forms on $V_1$, $V_2$ respectively. Let $\beta$ be the bilinear form on $V_1 \otimes V_2$ defined by
\begin{eqnarray}
\beta(a\otimes b,c \otimes d) := \beta_1(a,c) \beta_2(b,d).
\end{eqnarray}
For any subspace $U$ of $V_1$, $V_2$ or $V_1 \otimes V_2$, we abuse notation and denote by $U^{\perp}$ the orthogonal space of $U$ with respect to $\beta_1$, $\beta_2$ or $\beta$. We define $\PG(U)^{\perp} := \PG(U^{\perp})$, and for a nonzero vector $v$ we define $v^{\perp} = \langle v\rangle ^{\perp}$. 

For $g_i \in \GL(V_i)$, denote by $\hat{g_i}$ the adjoint of $g_i$ with respect to $\beta_i$, i.e. $\beta_i(x^{g_i},y) = \beta_i(x,y^{\hat{g}_i})$ for all $x,y \in V_i$. Then the adjoint of $(g_1,g_2) \in K \leq \GL(V_1 \otimes V_2)$ with respect to $\beta$ is $(\hat{g}_1,\hat{g}_2)$, that is $\beta(x^{(g_1,g_2)},y) = \beta(x,y^{(\hat{g}_1,\hat{g}_2)})$ for all $x,y \in V_1 \otimes V_2$.

\begin{lemma}\label{lem:codim}
The $K$-orbits on $k$-spaces in $V_1\otimes V_2$ are in one-to-one correspondence with the $K$-orbits on codimension $k$-spaces in $V_1\otimes V_2$.
\end{lemma}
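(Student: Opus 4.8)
The plan is to exhibit an explicit bijection between $K$-orbits on $k$-spaces and $K$-orbits on codimension-$k$ spaces, using the perpendicularity operator $U \mapsto U^\perp$ with respect to the bilinear form $\beta$ introduced just above the lemma statement. Recall that $\beta(a \otimes b, c \otimes d) = \beta_1(a,c)\beta_2(b,d)$ is nondegenerate (being the tensor product of the nondegenerate forms $\beta_1$, $\beta_2$), so for any subspace $U \le V_1 \otimes V_2$ we have $\dim U^\perp = \dim(V_1 \otimes V_2) - \dim U$ and $(U^\perp)^\perp = U$. Hence $U \mapsto U^\perp$ is an involution on the set of subspaces of $V_1 \otimes V_2$ sending $k$-spaces to codimension-$k$ spaces and vice versa.

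The key step is to check that this involution is compatible with the $K$-action up to an automorphism of $K$, so that it descends to a bijection on orbits. Concretely, I would show that for any $(g_1,g_2) \in K$ and any subspace $U$,
\begin{equation}
\bigl(U^{(g_1,g_2)}\bigr)^\perp = \bigl(U^\perp\bigr)^{(\hat g_1^{-1}, \hat g_2^{-1})},
\end{equation}
where $\hat g_i$ is the adjoint of $g_i$ with respect to $\beta_i$. This follows from the adjoint identity recorded in the excerpt: $\beta(x^{(g_1,g_2)}, y) = \beta(x, y^{(\hat g_1, \hat g_2)})$ for all $x, y \in V_1 \otimes V_2$. Indeed, $y \in (U^{(g_1,g_2)})^\perp$ iff $\beta(x^{(g_1,g_2)}, y) = 0$ for all $x \in U$, iff $\beta(x, y^{(\hat g_1, \hat g_2)}) = 0$ for all $x \in U$, iff $y^{(\hat g_1, \hat g_2)} \in U^\perp$, iff $y \in (U^\perp)^{(\hat g_1^{-1}, \hat g_2^{-1})}$. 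Since $(g_1,g_2) \mapsto (\hat g_1^{-1}, \hat g_2^{-1})$ is a (group) automorphism of $K$ — it is a composition of the inverse-transpose-type maps on each factor, each of which is an automorphism of $\GL(V_i)$ because $\beta_i$ is nondegenerate — the two actions of $K$ on subspaces (the natural one and the one twisted by this automorphism) have the same orbits. Therefore $U$ and $U'$ lie in the same $K$-orbit if and only if $U^\perp$ and $(U')^\perp$ do, and $U \mapsto U^\perp$ induces a well-defined bijection between $K$-orbits on $k$-spaces and $K$-orbits on codimension-$k$ spaces, with inverse induced by the same operation.

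I expect the main (modest) obstacle to be purely bookkeeping: verifying carefully that $\beta$ is nondegenerate and that $g \mapsto \hat g^{-1}$ is an automorphism rather than an anti-automorphism of $\GL(V_i)$ — one should note $\widehat{g_1 g_2} = \hat g_2 \hat g_1$, so $g \mapsto \hat g$ alone is an anti-automorphism, but composing with inversion $g \mapsto g^{-1}$ (also an anti-automorphism) yields a genuine automorphism. Once this is in place, the orbit correspondence is immediate. There is no real analysis or case-work needed; the argument is entirely formal and, as the surrounding text emphasizes, field independent.
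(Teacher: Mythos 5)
Your proof is correct and follows essentially the same route as the paper: both use the perp map $U \mapsto U^\perp$ with respect to $\beta$ together with the adjoint identity $\beta(x^{(g_1,g_2)},y) = \beta(x,y^{(\hat g_1,\hat g_2)})$ to show that $K$-equivalent subspaces have $K$-equivalent perps. Your additional remarks on nondegeneracy of $\beta$ and on $g \mapsto \hat g^{-1}$ being a genuine automorphism just make explicit what the paper leaves to ``a consideration of dimensions'' and symmetry.
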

\Proof
Suppose $U$ is a subspace of $V_1 \otimes V_2$ of dimension $k$, and suppose $U$ is $K$-equivalent to $W$, i.e. there exist $g_1 \in \GL(V_1)$, $g_2 \in \GL(V_2)$ such that $W = U^{(g_1,g_2)} = \{x^{(g_1,g_2)} : x \in U\}$. Suppose $y \in 
W^{\perp}$, i.e. $\beta(x^{(g_1,g_2)},y)=0$ for all $x \in U$. But then $b(x,y^{(\hat{g}_1,\hat{g}_2)})=0$ for all $x \in U$, implying that $y^{(\hat{g}_1,\hat{g}_2)} \in U^{\perp}$. Hence $ (W^{\perp})^{(\hat{g}_1,\hat{g}_2)}\leq U^{\perp} $, and a consideration of dimensions concludes the proof.
\qed

\begin{lemma}\label{lem:subvarperp}
Let $U \leq \F^{n_1}$ and $V \leq \F^{n_2}$. 

(a) Then $(U \otimes V)^{\perp} = \langle U^{\perp} \otimes \F^{n_2},\F^{n_1} \otimes V^{\perp}\rangle$, and $\PG(U \otimes V)^{\perp}$ meets $S_{n_1,n_2}$ in precisely $\sigma_{n_1,n_2}(\PG(U^{\perp}) \times \PG(\F^{n_2})) \cup \sigma_{n_1,n_2}(\PG(\F^{n_1}) \times \PG(V^{\perp}))$.

(b) Suppose $\PG(A)$ is a subspace of $\PG(U \otimes V)$, but is disjoint from $\sigma_{n_1,n_2}(\PG(U) \times \PG(V))$, and $A$ has dimension $jk-\mathrm{max}\{j,k\}$, where $\dim(U)=j$ and $\dim(V)=k$. Then $\PG(A^{\perp})$ meets $S_{n_1,n_2}$ in precisely 
$\sigma_{n_1,n_2}(\PG(U^{\perp}) \times \PG(\F^{n_2})) \cup \sigma_{n_1,n_2}(\PG(\F^{n_1}) \times \PG(V^{\perp}))$.
\end{lemma}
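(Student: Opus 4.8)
Part (a) is essentially a computation in linear algebra plus an identification of the intersection with the Segre variety. For the first identity, I would argue that $\langle U^{\perp} \otimes \F^{n_2}, \F^{n_1} \otimes V^{\perp}\rangle$ is contained in $(U \otimes V)^{\perp}$ by a direct check on pure tensors using the definition of $\beta$ and the fact that $\beta_1(U^\perp, U) = 0$ and $\beta_2(V^\perp, V) = 0$; then equality follows by comparing dimensions, since $\dim\langle U^{\perp} \otimes \F^{n_2}, \F^{n_1} \otimes V^{\perp}\rangle = (n_1-j)n_2 + n_1(n_2-k) - (n_1-j)(n_2-k) = n_1 n_2 - jk = \dim (U\otimes V)^\perp$, the last intersection term coming from $(U^\perp \otimes \F^{n_2}) \cap (\F^{n_1} \otimes V^\perp) = U^\perp \otimes V^\perp$. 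For the statement about the Segre variety, a fundamental point $\sigma_{n_1,n_2}(\langle a\rangle \times \langle b \rangle)$ lies in $\PG((U\otimes V)^\perp)$ if and only if $\beta_1(a,u)\beta_2(b,v) = 0$ for all $u \in U$, $v \in V$, which by nondegeneracy of the individual forms forces $a \in U^\perp$ or $b \in V^\perp$; this is exactly the asserted union.

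Part (b) is the substantive claim and where the main work lies. The hypotheses are: $\PG(A) \subseteq \PG(U\otimes V)$, $\PG(A)$ is disjoint from the sub-Segre variety $\sigma_{n_1,n_2}(\PG(U)\times\PG(V))$, and $\dim A = jk - \max\{j,k\}$. I would first reduce to the genuinely relevant ambient space: since $\PG(A) \subseteq \PG(U\otimes V)$, we have $A^\perp \supseteq (U\otimes V)^\perp$, so by part (a) the intersection $\PG(A^\perp) \cap S_{n_1,n_2}$ automatically contains $\sigma_{n_1,n_2}(\PG(U^\perp)\times\PG(\F^{n_2})) \cup \sigma_{n_1,n_2}(\PG(\F^{n_1})\times\PG(V^\perp))$. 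The real content is the reverse inclusion: no further fundamental points appear. So suppose $\sigma_{n_1,n_2}(\langle a\rangle\times\langle b\rangle) \in \PG(A^\perp)$ with $a \notin U^\perp$ and $b \notin V^\perp$; I must derive a contradiction. Writing $a = a_U + a'$ with $a_U$ the $\beta_1$-orthogonal projection onto $U$ (and $a' \in U^\perp$), and similarly $b = b_V + b'$, and using that $A \subseteq U\otimes V$ so that $\beta(A, \F^{n_1}\otimes V^\perp) = \beta(A, U^\perp \otimes \F^{n_2}) = 0$, one gets $\beta(A, a\otimes b) = \beta(A, a_U \otimes b_V)$; hence the condition is equivalent to $\sigma(\langle a_U\rangle \times \langle b_V\rangle) \in \PG(A^\perp)$ with $a_U \in U\setminus\{0\}$, $b_V \in V\setminus\{0\}$ (nonzero precisely because $a\notin U^\perp$, $b\notin V^\perp$). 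Thus it suffices to show $\PG(A^\perp)$ meets $\sigma_{n_1,n_2}(\PG(U)\times\PG(V))$ trivially.

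For this I would work inside $W := U\otimes V$ with the restricted form $\beta|_W$ (nondegenerate, being the tensor of $\beta_1|_U$ and $\beta_2|_V$), and let $A^{\perp_W}$ denote the orthogonal complement of $A$ in $W$. A point $\sigma(\langle a_U\rangle\times\langle b_V\rangle)$ with $a_U\otimes b_V \in W$ lies in $\PG(A^\perp)$ iff it lies in $\PG(A^{\perp_W})$. Now $\dim A^{\perp_W} = jk - \dim A = \max\{j,k\} = \min\{j,k\}\cdot \frac{jk}{jk/\min\{j,k\}}$ — more simply $\dim A^{\perp_W} = \max\{j,k\}$. The claim becomes: a subspace $B := A^{\perp_W}$ of $U\otimes V \cong \F^j\otimes\F^k$ of dimension $\max\{j,k\}$ which is the orthogonal complement of a subspace disjoint from the sub-Segre $S_{j,k}$ must itself be disjoint from $S_{j,k}$. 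By Lemma~\ref{lem:codim} applied with $K = \GL(U)\times\GL(V)$, the orbit of $A$ determines the orbit of $B = A^{\perp_W}$; and a subspace of $\F^j\otimes\F^k$ of dimension $jk - \max\{j,k\}$ disjoint from $S_{j,k}$ corresponds, under the contraction/matrix-space viewpoint, to a space of $j\times k$ matrices of bounded rank — concretely its perp $B$ is $\max\{j,k\}$-dimensional and one checks (this is the crux) that $B$ contains no rank-one matrix. I expect the cleanest route is to invoke that $\PG(A)$ disjoint from $S_{j,k}$ forces, up to $K$-equivalence, a normal form for $A$ (the ``complementary'' constant-rank situation, e.g. in $\F^2\otimes\F^3$ the lines $o_{10},o_{17}$ of Table~\ref{tab:lines}/\ref{tab:planes}), and then $B = A^{\perp_W}$ reads off as the analogous constant-rank space with no rank-one points; alternatively one argues abstractly that if $a_U\otimes b_V \in B = A^{\perp_W}$ then $\beta(A, a_U\otimes b_V) = 0$, so $A \subseteq (a_U\otimes b_V)^{\perp_W}$, a hyperplane of $W$ that meets $S_{j,k}$ in a codimension-one-in-each-ruling configuration, and a dimension count on $A$ versus this hyperplane section of the Segre, combined with disjointness of $A$ from $S_{j,k}$, yields the contradiction.

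The main obstacle is precisely this last dimension-counting/normal-form step in part (b): establishing that the perp of a bounded-rank subspace of the sub-Segre space is again a bounded-rank subspace (contains no fundamental point), for which the exact hypothesis $\dim A = jk - \max\{j,k\}$ is essential — it is the critical dimension at which ``disjoint from $S_{j,k}$'' is self-dual under $\perp$. In the applications in this paper $j,k \le 3$, so this can if necessary be verified orbit-by-orbit against Tables~\ref{tab:lines} and~\ref{tab:planes}, but I would prefer to present the conceptual argument via Lemma~\ref{lem:codim} and the hyperplane-section count, since it is field-independent and generalises.
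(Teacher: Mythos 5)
Part (a) of your proposal matches the paper's proof (the paper is terser on the dimension count, but the argument is the same). For part (b) you have correctly isolated the real content --- that no fundamental point $a\otimes b$ with $a\notin U^{\perp}$, $b\notin V^{\perp}$ can lie in $\PG(A^{\perp})$ --- and your ``abstract alternative'' is in fact the paper's argument; but you stop exactly at the step that constitutes the proof, and you yourself flag it as the unresolved crux. The count is short and must be carried out: if $a\otimes b\in A^{\perp}$ with $a\notin U^{\perp}$ and $b\notin V^{\perp}$, then $W=(U\otimes V)\cap(a\otimes b)^{\perp}$ is a hyperplane of $U\otimes V$ containing $A$; for nonzero $x\in U\cap a^{\perp}$ and $y\in V\cap b^{\perp}$ (such vectors exist since $\dim(U\cap a^{\perp})\ge j-1$, and one may assume $j,k\ge 2$ as otherwise $A=0$), the subspaces $\langle x\rangle\otimes V$ and $U\otimes\langle y\rangle$, of dimensions $k$ and $j$, are contained in $W$. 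Since $\dim A+\max\{j,k\}=jk>jk-1=\dim W$, the space $A$ meets the larger of these two subspaces in a nonzero vector of the form $x\otimes v$ or $u\otimes y$, i.e.\ in a point of $\sigma_{n_1,n_2}(\PG(U)\times\PG(V))$, contradicting the disjointness hypothesis. The count is tight, which is precisely why the hypothesis $\dim A=jk-\max\{j,k\}$ is what is needed; you observed this but did not verify that the inequality closes. Your first proposed route (reading $A$ off against the tables of lines and planes disjoint from the Segre) would only cover $j,k\le 3$ and does not prove the lemma as stated.

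There is also a repairable but genuine flaw in your reduction: you write $a=a_U+a'$ with $a_U$ the $\beta_1$-orthogonal projection of $a$ onto $U$, and later assert that $\beta|_{U\otimes V}$ is nondegenerate. Both presuppose $U\cap U^{\perp}=0$ and $V\cap V^{\perp}=0$, i.e.\ that the restrictions $\beta_1|_U$ and $\beta_2|_V$ are nondegenerate. This is not among the hypotheses of the lemma and fails whenever $U$ or $V$ meets its own perp (isotropic vectors exist over finite fields, and over $\F_2$ even one-dimensional examples occur). The argument sketched above avoids any projection or restriction of the form: one works throughout with $a,b$ in the ambient spaces and with $U\cap a^{\perp}$, $V\cap b^{\perp}$, whose dimensions are bounded below independently of any degeneracy of the restricted forms.
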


\begin{proof}
(a) Suppose $a \otimes b \in (u \otimes v)^{\perp}$. Then $\beta_1(a,u)\beta_2(b,v)=0$, and so $a \in u^{\perp}$ or $b \in v^{\perp}$. Hence $a \otimes b \in (U \otimes V)^{\perp}$ if and only if $a \in u^{\perp}$ or $b \in v^{\perp}$ for all $u \in U,v \in V$, and hence $a \in U^{\perp}$ or $b \in V^{\perp}$.
This implies that
$\PG(U \otimes V)^{\perp}$ meets $S_{n_1,n_2}$ in precisely 
$\sigma_{n_1,n_2}(\PG(U^{\perp}) \times \PG(\F^{n_2})) \cup \sigma_{n_1,n_2}(\PG(\F^{n_1}) \times \PG(V^{\perp}))$.
and hence $ \langle U^{\perp} \otimes \F^{n_2},\F^{n_1} \otimes V^{\perp}\rangle \leq (U \otimes V)^{\perp}$. Considering dimensions, equality follows.

(b) Since $A \leq U \otimes V$, $A^{\perp} \geq (U \otimes V)^{\perp}$, and by part (a)
$\sigma_{n_1,n_2}(\PG(U^{\perp}) \times \PG(\F^{n_2})) \cup \sigma_{n_1,n_2}(\PG(\F^{n_1}) \times \PG(V^{\perp}))$
is contained in $\PG(A)^{\perp} \cap S_{n_1,n_2}$.

Suppose now $a \otimes b \in A^{\perp}$, where $a \notin U^{\perp}$ and $b \notin V^{\perp}$. Then $U \otimes V$ is not contained in $(a \otimes b)^{\perp}$, and hence $W = (U \otimes V) \cap (a \otimes b)^{\perp}$ is a hyperplane in $U \otimes V$, containing $A$. 
Now if $x \in U \cap a^{\perp}$ and $y \in V \cap b^{\perp}$, then by part (a) of this lemma, $\langle x\rangle \otimes V$ is a subspace  of dimension $k$ contained in $W$, and $U \otimes \langle y\rangle$ is a subspace of dimension $j$ contained in $W$. 
Since $\dim W=jk-1$, and $\dim A=jk-\mathrm{max}\{j,k\}$, $A$ must intersect at least the larger of the
subspaces $\langle x\rangle \otimes V$ and $U \otimes \langle y\rangle$, contradicting the hypothesis that
$\PG(A)$ is disjoint from $\sigma_{n_1,n_2}(\PG(U) \times \PG(V))$.
\end{proof}

Now using Lemma \ref{lem:codim} and Lemma \ref{lem:subvarperp}, we can classify all the $\bar{H}_2$-orbits on subspaces of $\PG(\F^2 \otimes \F^3)$, by considering $\PG(A_2^{\perp})$ for each orbit whose second contraction space $A_2$ is a point or a line. We determine these spaces and their intersection with the Segre variety $S_{2,3}$.
%, and their rank distributions $r_2(A_2)$. 

\begin{definition}
We identify $o_i$ with the $\bar{H}_2$-orbit of subspaces in $\PG(\F^2\otimes \F^3)$ containing $\PG(A_2)$, where $A\in o_i$, and
we define $o_i^{\perp}$ to be the $\bar{H}_2$-orbit of subspaces in $\PG(\F^2\otimes \F^3)$ containing $\PG(A_2)^\perp$, where $A\in o_i$.
\end{definition}
We start with the following lemma which will be used in the proof of the main result of this section. 
\begin{lemma}\label{lem:$o_12^perp$}
$o_{12}^{\perp}=o_{11}^T$.
\end{lemma}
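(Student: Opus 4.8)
The plan is to compute the orthogonal space of the second contraction space of a canonical representative of $o_{12}$ directly, and identify which orbit of planes in $\PG(\F^2\otimes\F^3)$ it lies in by matching it against Table \ref{tab:lines} (or rather, since $o_{12}^\perp$ will be a plane, against Table \ref{tab:planes}). From the proof of Theorem \ref{thm:pts_lines_planes} we have, for a canonical $A\in o_{12}$, that
\begin{equation*}
A_2=\langle e_1\otimes e_1+ e_2\otimes e_3,\ e_1\otimes e_2,\ e_2\otimes e_2\rangle,
\end{equation*}
a line (projectively) in $\PG(\F^2\otimes\F^3)$, so $A_2^\perp$ has projective dimension $6-1-3=3$... wait, $\dim(\F^2\otimes\F^3)=6$, $\dim A_2=3$, so $\dim A_2^\perp=3$, i.e. $\PG(A_2^\perp)$ is a plane. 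First I would fix the bilinear forms: take $\beta_1,\beta_2$ to be the standard forms making $\{e_i\}$ orthonormal, so that $\beta(e_i\otimes e_j,\ e_k\otimes e_l)=\delta_{ik}\delta_{jl}$. Then computing $A_2^\perp$ is a routine linear-algebra exercise: I look for all $\sum c_{ij}\,e_i\otimes e_j$ orthogonal to each of the three generators, giving the three equations $c_{11}+c_{23}=0$, $c_{12}=0$, $c_{22}=0$, whose solution space is spanned by (say) $e_1\otimes e_3$, $e_2\otimes e_1$, $e_2\otimes e_2$, and $e_1\otimes e_1-e_2\otimes e_3$.

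Next I would put this plane into the matrix shorthand used in the tables, writing a general element of $A_2^\perp$ as a $2\times 3$ array and comparing with the representatives listed. A general element is
\begin{equation*}
\begin{bmatrix} s & \cdot & t \\ u & v & -s\end{bmatrix},
\end{equation*}
and after a suitable coordinate change in the $\F^3$ factor (and possibly the $\F^2$ factor), I expect this to be brought to the form $\left[\begin{smallmatrix} x & y & \cdot \\ y & z & \cdot\end{smallmatrix}\right]$, which is the representative of $o_{11}^T$ in Table \ref{tab:planes} — the plane contained in an $\langle S_{2,2}\rangle$ meeting $S_{2,2}$ in a conic. Concretely, I would exhibit an explicit $(g,h)\in H_2$ sending $A_2^\perp$ to the span of $\{e_1\otimes e_1,\ e_1\otimes e_2+e_2\otimes e_1,\ e_2\otimes e_2\}$, thereby proving $o_{12}^\perp=o_{11}^T$ on the nose. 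Alternatively — and this is perhaps cleaner and more in the spirit of the paper — I could invoke Lemma \ref{lem:subvarperp}(b): since $\PG(A_2)$ for $o_{12}$ is a line disjoint from $S_{2,3}$? No — $o_{12}$ meets $S_{2,3}$ in a line of the first kind, so part (b) does not apply to $A_2$ itself; but it may apply after restricting to an appropriate $\langle S_{2,2}\rangle$, or I can use part (a) together with the observation that $o_{11}^T$ is characterised among planes by being contained in an $\langle S_{2,2}\rangle$ and meeting it in a conic.

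The main obstacle I anticipate is not the computation of $A_2^\perp$, which is immediate, but rather the bookkeeping needed to certify that the resulting plane is genuinely in the orbit $o_{11}^T$ and not in some other orbit of planes with a superficially similar description — in particular, distinguishing it from $o_{11}$ (a constant rank two line, wrong dimension, so actually not an issue) and, more to the point, verifying the two defining geometric features of $o_{11}^T$: that the plane lies in the span of a sub-Segre $S_{2,2}$, and that it meets that $S_{2,2}$ in a (nondegenerate) conic rather than, say, in two lines (which would be $o_7^T$) or in fewer points. I would handle this by exhibiting the sub-Segre explicitly: the plane $A_2^\perp$ above lies in $\langle e_1,e_2\rangle\otimes\langle e_1,e_2,e_3\rangle$ but I must cut down to a $2\times 2$ piece; after the coordinate change bringing it to $\left[\begin{smallmatrix} x & y & \cdot \\ y & z & \cdot\end{smallmatrix}\right]$ the relevant $S_{2,2}$ is spanned by $\langle e_1,e_2\rangle\otimes\langle e_1,e_2\rangle$, and the rank-one (i.e. $S_{2,2}$) points in the plane are those with $xz-y^2=0$, a smooth conic. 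This is exactly the $o_{11}^T$ description in Table \ref{tab:planes}, completing the proof.
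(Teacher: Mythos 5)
Your overall strategy --- compute $A_2^\perp$ explicitly for the canonical representative and match it against Table \ref{tab:planes} --- is sound, but there is an arithmetic slip that, as written, breaks the key step. Since $e_2\otimes e_2$ is one of the generators of $A_2$ and $\beta(e_2\otimes e_2,\,e_2\otimes e_2)=1$, the vector $e_2\otimes e_2$ does \emph{not} lie in $A_2^\perp$; your claimed spanning set has four vectors (one too many for a $3$-dimensional space, as your own dimension count shows), and your ``general element'' carries a spurious free parameter $v$ in position $(2,2)$, so it describes a solid rather than a plane. The correct solution of the system $c_{11}+c_{23}=0$, $c_{12}=0$, $c_{22}=0$ is
\begin{equation*}
A_2^{\perp}=\left\{\left[\begin{smallmatrix} s & \cdot & t \\ u & \cdot & -s \end{smallmatrix}\right] : s,t,u\in\F\right\}
=\langle\, e_1\otimes e_1 - e_2\otimes e_3,\; e_1\otimes e_3,\; e_2\otimes e_1\,\rangle .
\end{equation*}
With this correction the remainder of your argument goes through and is in fact easier than you anticipate: the second column is identically zero, so $\PG(A_2^\perp)$ visibly lies in $\langle S_{2,2}\rangle=\PG(\F^2\otimes\langle e_1,e_3\rangle)$, and its rank-one locus $-s^2-tu=0$ is a smooth conic in every characteristic (it is projectively the Veronese conic), which by Table \ref{tab:planes} pins down the orbit as $o_{11}^T$ without needing to exhibit an explicit element of $H_2$.

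For comparison, the paper argues synthetically and coordinate-free: since $\PG(A_2)$ contains a line of the first kind, Lemma \ref{lem:subvarperp} forces $\PG(A_2^\perp)$ to be contained in an $\langle S_{2,2}\rangle$; and $\PG(A_2^\perp)$ cannot itself contain a line of the first kind, for then $\PG(A_2)$ would lie in an $\langle S_{2,2}\rangle$, contradicting the description of $o_{12}$ in Table \ref{tab:planes}. Among the plane orbits in that table only $o_{11}^T$ satisfies both constraints. That route uses only the duality of Lemma \ref{lem:subvarperp} and the completed classification; your computational route, once repaired, is equally valid and has the additional merit of exhibiting the conic and the ambient $S_{2,2}$ explicitly.
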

\Proof
Let $A$ be a tensor in $o_{12}$. If $\PG(A_2^\perp)$ contains a line of the first kind of $S_{2,3}$, say $\PG(\F^2 \otimes \langle v \rangle)$, then by Lemma \ref{lem:subvarperp}, the plane $\PG(A_2)$ is contained in the $\PG(\F^2\otimes v^\perp)$, a contradiction. This implies that $\PG(A_2^\perp)$ does not contain a line of the first kind. Moreover, since $\PG(A_2)$ does contain a line of the first kind, $\PG(A_2^\perp)$ is contained in an $\langle S_{2,2}\rangle$. By Table \ref{tab:planes} this only leaves the possibility
$o_{12}^{\perp}=o_{11}^T$. 
\qed

\subsection{Finite Fields}
\begin{theorem}\label{thm:sols_hyps}
If $\F$ is a finite field, then under the action of $\bar{H}_2$, there are 7 orbits of solids, and 2 orbits of hyperplanes in $\PG(\F^2\otimes \F^3)$. The description is as in Table \ref{tab:solids} and Table \ref{tab:hyp}.
\end{theorem}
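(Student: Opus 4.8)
The plan is to use the duality established in Lemma \ref{lem:codim}: since $\PG(\F^2\otimes\F^3)$ is $5$-dimensional, the $\bar H_2$-orbits on solids (i.e.\ $3$-spaces) are in bijection with the orbits on lines, and the orbits on hyperplanes ($4$-spaces) are in bijection with the orbits on points. By Theorem \ref{thm:pts_lines_planes} there are $7$ orbits of lines and $2$ orbits of points, so the counts $7$ and $2$ follow immediately. What remains is to produce the explicit description in Table \ref{tab:solids} and Table \ref{tab:hyp}, namely a canonical form $\PG(A_2^\perp)$ for each orbit and its intersection pattern with the Segre variety $S_{2,3}$.

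First I would fix the bilinear form $\beta$ on $\F^2\otimes\F^3$ coming from the standard symmetric forms on $\F^2$ and $\F^3$ (so that $\beta(e_i\otimes e_j,\,e_k\otimes e_l)=\delta_{ik}\delta_{jl}$ in matrix terms, $\beta(X,Y)=\Tr(X Y^{t})$), and for each of the two point orbits $o_1,o_4$ and each of the seven line orbits $o_2,o_4^T,o_5,o_6,o_7,o_{10},o_{11}$ from Tables \ref{tab:points} and \ref{tab:lines}, compute the orthogonal complement of the given representative. This is a routine linear-algebra computation: the hyperplane $o_1^\perp$ is the $\perp$ of a rank-one matrix, the hyperplane $o_4^\perp$ the $\perp$ of a rank-two matrix, and similarly each solid $o_i^\perp$ for a line $o_i$. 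The one case that needs the parameter bookkeeping is $o_{10}^\perp$ (and whether it stays in its own orbit), and here Lemma \ref{lem:subvarperp} does the geometric work.

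Next I would determine $\PG(A_2^\perp)\cap S_{2,3}$ in each case. For the orbits whose line representative is of the form $U\otimes V$ or is disjoint from the relevant sub-Segre, Lemma \ref{lem:subvarperp} gives the intersection directly as $\sigma(\PG(U^\perp)\times\PG(\F^3))\cup\sigma(\PG(\F^2)\times\PG(V^\perp))$; for instance $o_2$ is a line of the second kind $\sigma(\langle e_1\rangle\times\langle e_1,e_2\rangle)$, so its $\perp$ is a solid meeting $S_{2,3}$ in $\sigma(\langle e_2\rangle\times\PG(\F^3))\cup\sigma(\PG(\F^2)\times\langle e_3\rangle)$, a line of the first kind and a line of the second kind meeting in a point, etc. The constant-rank-two lines $o_{10}$ and $o_{11}$ are disjoint from $S_{2,3}$, so part (b) of Lemma \ref{lem:subvarperp} applies (one must first check they are contained in an appropriate $U\otimes V$ of the right dimension — for $o_{10}$ inside an $\langle S_{2,2}\rangle$, for $o_{11}$ inside all of $\F^2\otimes\F^3$); I would invoke Lemma \ref{lem:$o_12^perp$} to pin down $o_{11}$'s partner, noting $o_{11}^\perp$ and $o_{12}^\perp=o_{11}^T$ are related by the symmetry of $\perp$, and cross-check the few ambiguous cases against the already-classified plane orbits in Table \ref{tab:planes} by a dimension-and-intersection argument exactly as in the proof of Lemma \ref{lem:$o_12^perp$}. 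Finally I would record in the tables which orbit each $o_i^\perp$ equals when it coincides with a previously-named orbit, and which are genuinely new solid/hyperplane orbits, and conclude by remarking (as elsewhere in the paper) that every step is field-independent except the existence of the parameters for $o_{10}$, which is exactly the finite-field hypothesis.

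The main obstacle I anticipate is not any single hard computation but the bookkeeping: ensuring that distinct line orbits really do produce distinct solid orbits (the bijection of Lemma \ref{lem:codim} guarantees this abstractly, but one still wants the explicit representatives to be visibly inequivalent, which is most cleanly done via the intersection-with-$S_{2,3}$ invariant), and correctly handling $o_{10}^\perp$, where one must verify the representative lies in an $\langle S_{2,2}\rangle$ of the correct dimension before Lemma \ref{lem:subvarperp}(b) applies and then argue, as in Lemma \ref{lem:$o_12^perp$}, that the constraints force it back into the $o_{10}$-type family (a self-dual orbit). Everything else is a mechanical application of Lemmas \ref{lem:codim} and \ref{lem:subvarperp} together with the tables already proved.
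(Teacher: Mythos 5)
Your overall strategy coincides with the paper's: Lemma \ref{lem:codim} gives the bijection between line/point orbits and solid/hyperplane orbits (hence the counts $7$ and $2$), and the real content of the theorem is the explicit description of each perp's intersection with $S_{2,3}$, obtained case by case via Lemma \ref{lem:subvarperp}. Most of your case analysis matches the paper's. However, one concrete step in your plan fails: the treatment of $o_{11}^{\perp}$. You propose to apply Lemma \ref{lem:subvarperp}(b) to a line $l=\PG(A_2)$ of type $o_{11}$ with $U\otimes V=\F^2\otimes\F^3$, but that part of the lemma requires $\dim A = jk-\mathrm{max}\{j,k\}=6-3=3$, i.e.\ a plane, whereas here $\dim A_2=2$. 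Worse, if the conclusion of part (b) did hold it would say that $\PG(A_2^{\perp})$ meets $S_{2,3}$ in $\sigma_{2,3}(\PG(U^{\perp})\times\PG(\F^3))\cup\sigma_{2,3}(\PG(\F^2)\times\PG(V^{\perp}))=\emptyset$, which is false: the solid $o_{11}^{\perp}$ meets $S_{2,3}$ in a normal rational curve with $q+1$ points, and this is exactly the entry of Table \ref{tab:solids} that distinguishes it from $o_{10}^{\perp}$. The paper gets this by a different argument: it first shows that every line in a plane of type $o_{17}$ (disjoint from $S_{2,3}$) is of type $o_{11}$ and not $o_{10}$, so that $l$ lies in a plane $\pi$ disjoint from $S_{2,3}$, whence $l^{\perp}$ contains the plane $\pi^{\perp}$, again disjoint from $S_{2,3}$; it then identifies $\pi$ with the image of a point under the field reduction map and invokes an external result (\cite[Theorem 3.3]{LaZaPrep}) to conclude that every solid containing $\pi^{\perp}$ meets $S_{2,3}$ in a normal rational curve. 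Your appeal to the identity $o_{12}^{\perp}=o_{11}^{T}$ does not substitute for this, since that identity relates two orbits of \emph{planes} and says nothing about the perp of the \emph{line} $o_{11}$.

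A second, smaller under-specification: you describe $o_4^{\perp}$ (the perp of a rank-two point) as a routine computation, but identifying its intersection with $S_{2,3}$ as a cubic scroll --- the datum recorded in Table \ref{tab:hyp} --- takes a genuine argument in the paper (the unique line $\ell_x$ of type $o_4^{T}$ inside $x^{\perp}$, the ruling by the lines $r_y$, and the conic obtained from a plane of type $o_{12}$ through $x$ via $o_{12}^{\perp}=o_{11}^{T}$). Likewise the split of the tangent-line perps into $o_6^{\perp}$ (two intersecting lines) versus $o_7^{\perp}$ (conic plus line) requires the argument about which kinds of Segre lines $l^{\perp}$ can contain. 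None of this affects the orbit count, which your use of Lemma \ref{lem:codim} settles correctly, but it is needed for the descriptive content of the tables, and for $o_{11}^{\perp}$ the tool you name would give the wrong answer.
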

\Proof
(a) Perps of points on $S_{2,3}$: $o_1^\perp$

Consider $o_1$, where $\PG(A_2)$ is a point of $S_{2,3}$, say $x=\langle u\otimes v\rangle$. By Lemma \ref{lem:subvarperp}, $x^{\perp}$ is a projective hyperplane meeting $S_{2,3}$ precisely in the plane $\PG(u^{\perp} \otimes \F^3)$ and the Segre variety $\sigma(\F^2 \times v^{\perp})$. They intersect in the line $\PG(u^{\perp}\otimes v^{\perp})$. 

(b) Perps of lines on $S_{2,3}$: $o_2^\perp$, $(o_4^T)^\perp$
 
Next take a line $l = \PG(A_2)$ with $A \in o_2$, that is, $l$ is a line of the second kind $\PG(u \otimes \langle a,b\rangle)$ on $S_{2,3}$. Then by Lemma \ref{lem:subvarperp}, $l^{\perp}$ meets $S_{2,3}$ in the union of the plane $\PG(u^{\perp} \otimes \F^3)$ and the line $\PG(\F^2 \otimes (a^{\perp}\cap b^{\perp}))$.

Next consider a line $l = \PG(A_2)$ with $A \in o_4^T$, that is, $l = \PG(\F^2 \otimes v)$ is a line of the first kind on $S_{2,3}$. By Lemma \ref{lem:subvarperp}, $l^{\perp}$ meets $S_{2,3}$ in precisely the Segre variety $\sigma(\F^2 \times v^{\perp})$, an $S_{2,2}$.

(c) Perps of lines secant to $S_{2,3}$: $o_5^\perp$

For a secant line $l = \langle u \otimes v,w \otimes z\rangle= \PG(A_2)$ with $A \in o_5$, we get that the intersection of $l^{\perp}$ with $S_{2,3}$ is the union of the three lines $\PG(\F^2 \otimes (v^{\perp} \cap z^{\perp}))$, $\PG(u^{\perp} \otimes z^{\perp})$ and $\PG(w^{\perp} \otimes v^{\perp})$. The second and third line are disjoint, and both meet the first line. 

(d) Perps of lines tangent to $S_{2,3}$: $o_6^\perp$, $o_7^\perp$

Suppose $l$ is a tangent line to $S_{2,3}$ at a point $\PG(u \otimes v)$. Let $y$ be a point of rank two on $l$. The intersection of $l^{\perp}$ with $S_{2,3}$ is then equal to the union of the intersections $y^{\perp} \cap \PG(u^{\perp} \otimes \F^3)$ and $y^{\perp} \cap \PG(\F^2 \otimes v^{\perp})$,

Then $l^{\perp}$ contains a line of the first kind, say $\PG(\F^2 \otimes b)$, if and only if $l$ is contained in $\PG(\F^2 \otimes b)^{\perp}$, which is equal to $\PG(\F^2 \otimes b^{\perp})$, and so $l$ is spanned by an $S_{2,2}$. Hence $l^{\perp}$ contains a line of the first kind if and only if it is the second contraction space of a tensor in $o_6^{\perp}$, and this line is unique.

Now $l^{\perp}$ contains a line of the second kind, say $\PG(a \otimes b^{\perp})$, if and only if $l$ is contained in $\PG(a \otimes b^{\perp})^{\perp}$, which equals $\langle a^{\perp} \otimes \F^3,\F^2 \otimes b\rangle$. But this contains a plane and a line of $S_{2,3}$ meeting in a point, and hence this point must be $\PG(u \otimes v)$, for otherwise $l$ would be a secant to $S_{2,3}$. Hence $a= u^{\perp}$ and $v \in b^{\perp}$, and this line is unique, for otherwise $l^{\perp}$ would contain $\PG(u^{\perp} \otimes \F^3)$, and $l$ would be contained in $\PG(u \otimes \F^3)$.

Hence the second contraction space of a tensor in $o_6^{\perp}$ meets $S_{2,3}$ in precisely two lines, one of each kind.

Now the second contraction space of a tensor in $o_7^{\perp}$ meets $\PG(\F^2 \otimes v^{\perp})$ in a plane. By the above, this plane cannot contain a line of $S_{2,3}$, and hence this contraction space meets $S_{2,3}$ in a conic and a line of the second kind.

(e) Perps of lines disjoint from $S_{2,3}$: $o_{10}^\perp$, $o_{11}^\perp$

If $l = \PG(A_2)$ with $A \in o_{10}$, i.e. $l$ is contained in some $\langle S_{2,2}\rangle$ but disjoint from $S_{2,2}$. Let $l \subset \langle \sigma(\PG(\F^2)\times m)\rangle$, where $m$ is a line of $\PG(\F^3)$.
Then, by Lemma \ref{lem:subvarperp}(b), $l^{\perp}$, the second contraction space of a tensor in $o_{10}^{\perp}$, meets $S_{2,3}$ in precisely the line $\sigma(\PG(\F^2) \times m^{\perp})$.

The second contraction space of $o_{17}$ is a plane disjoint from $S_{2,3}$, and hence any line contained in it is the second contraction space of a tensor in $o_{10}$ or $o_{11}$. We claim that $o_{10}$ is not possible. For suppose otherwise, i.e. $l$ is a line disjoint from $S_{2,3}$, contained in an $S_{2,2}$ (say, $\sigma(m_1 \times m_2$)), and contained in a plane $\pi$ disjoint from $S_{2.3}$. Now the line $m = \PG(\F^2 \otimes m_2^{\perp})$ is contained in $l^{\perp}$. But as $\pi^{\perp}$ is contained in $l^{\perp}$, we must have that $m$ meets $\pi^{\perp}$. But $m$ is contained in $S_{2,3}$, and $\pi^{\perp}$ is disjoint from $S_{2,3}$ by Lemma \ref{lem:subvarperp}, a contradiction.

Hence the second contraction space $l$ of a tensor in $o_{11}$ is contained in a plane $\pi$ disjoint from $S_{2,3}$, and so $l^{\perp}$, the second contraction space of a tensor in $o_{11}^{\perp}$, also contains a plane $\pi^{\perp}$, which is disjoint from $S_{2,3}$ by Lemma \ref{lem:subvarperp} (b). 

We may view the planes contained in this Segre variety as the image of the points on a subline $b\cong \PG(1,q)$ of the projective line $\PG(1,q^3)$ after applying the field reduction map ${\mathcal{F}}:={\mathcal{F}}_{2,3,q}$ (see \cite{LaVaPrep}). Since there is one orbit of planes disjoint from $S_{2,3}$, we may take the plane $\pi$ to be the image ${\mathcal{F}}(\Theta)$ of a point $\Theta$ of $\PG(1,q^n)\setminus b$ under $\mathcal F_{2,3,q}$. By \cite[Theorem 3.3]{LaZaPrep} each solid containing ${\mathcal{F}}(\Theta)$ intersects $S_{2,3}$ in a normal rational curve of degree 3 for $q\geq 3$ and of degree 2 for $q=2$.

(f) Perps of points not on $S_{2,3}$: $o_4^\perp$

Let $x$ be a point of rank two. As we have seen above, the perp of a point on $S_{2,3}$ contains an $S_{2,2}$. Viceversa, every hyperplane $\pi$ containing an $S_{2,2}$ is the perp of a point on $S_{2,3}$. For suppose $\pi$ contains 
$\sigma_{2,3}(\PG(\F^2) \times \ell)$, for some line $\ell \in \PG(\F^3)$. Then each plane 
$\langle u\otimes \F^3\rangle$ contained in $S_{2,3}$ is contained in a unique hyperplane containing 
$\sigma_{2,3}(\PG(\F^2) \times \ell)$, and no two such hyperplanes contain more than one of these planes.
It follows that the perp $x^\perp$ of the rank two point $x$ does not contain an $S_{2,2}$. But $x^\perp$ does contain
a unique line $\ell_x$ of type $o_4^T$, since we have seen that the perp of such a line is a solid containing an $S_{2,2}$ and,  by \cite[Lemma 2.4]{LaSh233} $x$ is contained in a unique such solid $\langle Q(x)\rangle$. For each point $y\in \ell_x$, the unique plane $\rho_y\ni y$ contained in $S_{2,3}$, intersects $x^\perp$ in a line $r_y$. The intersection of $x^\perp$ with $S_{2,3}$ is the union of these lines $\{r_y~:~y \in \ell_x\}$.
Another way to describe this hyperplane is by considering a plane $\tau$ on $x$ meeting $S_{2,3}$ in a line $t$ of the first kind, i.e. $\tau$ is a plane of type $o_{12}$ . 
Then, by Lemma \ref{lem:$o_12^perp$},
$\tau^\perp$ is a plane of type $o_{11}^T$ meeting $S_{2,3}$ in the conic $\mathcal C$ which is the intersection of $t^\perp$ with $x^\perp$, and $\tau^\perp$ is disjoint from $\ell_x$. Each solid $\kappa$ on $t$ intersecting $S_{2,3}$ in an $S_{2,2}$ gives a line $\kappa^\perp$ of the first kind intersecting $\tau^\perp$ in a point of $\mathcal C$. The lines $r_y$, $y\in \ell_x$ give a one-to-one correspondence between the points of $\ell_x$ and the points of $\mathcal C$.  This is known as a cubic scroll in $\PG(4,\F)$.
\qed

\medskip

%\subsection{Summary}
We summarise the orbits of solids and hyperplanes in the following tables. We also include the number of rank one points contained in a representative of the orbit.

\begin{table}[H]
\begin{center}
\begin{tabular}{|c|c|c|c|c|}
\hline
{\bf Orbit} & {\bf Intersection with} $\mathbf{S_{2,3}}$ &{\bf Min}& {\bf Rep}
&{\bf $\#$ rank 1}\\
\hline
$o_2^{\perp}$ & plane and line & $2\times 3$
& {\tiny{$\left [ \begin{array}{ccc} \cdot & \cdot & x \\ y& z & w \end{array}\right ] $}}
&$q^2+2q+1$ \\ 
\hline
$(o_4^T)^{\perp}$ & $S_{2,2}$ & $2\times 2$
& {\tiny{$\left [ \begin{array}{ccc} \cdot & x & y \\ \cdot& z & w \end{array}\right ] $}}
&$q^2+2q+1$ \\ 
\hline
$o_5^{\perp}$ & three lines & $2\times 3$
& {\tiny{$\left [ \begin{array}{ccc} \cdot & x & y \\ z& \cdot&  w \end{array}\right ] $}}
&$3q+1$ \\ 
\hline
$o_6^{\perp}$ & two intersecting lines & $2\times 3$
& {\tiny{$\left [ \begin{array}{ccc} y & y & z \\ \cdot& -x & w \end{array}\right ] $}}
&$2q+1$ \\ 
\hline
$o_7^{\perp}$ & conic and line & $2\times 3$
& {\tiny{$\left [ \begin{array}{ccc} x & \cdot & y \\ z& w & -x \end{array}\right ] $}}
&$2q+1$ \\ 
\hline
$o_{10}^{\perp}$ &line & $2\times 3$
& {\tiny{$\left [ \begin{array}{ccc} x & -vy & z \\ y& -x+uvy & w \end{array}\right ] $}}
&$q+1$\\ 
 &&&{\tiny{$v\lambda^2+uv\lambda - 1 \neq 0, \forall \lambda \in \F$}}& \\
\hline
$o_{11}^{\perp}$ &normal rational curve & $2\times 3$
& {\tiny{$\left [ \begin{array}{ccc}  x & y & z \\ w& -x & -y \end{array}\right ] $}}
&$q+1$\\ 
\hline
\end{tabular}
\caption{The seven orbits of solids in $\PG(\F^2\otimes\F^3)$ for $\F=\F_q$.}
\label{tab:solids}
\end{center}
\end{table}

\begin{table}[h]
\begin{center}
\begin{tabular}{|c|c|c|c|c|}
\hline
{\bf Orbit} & {\bf Intersection with} $\mathbf{S_{2,3}}$ & {\bf Min} & {\bf Rep}
&{\bf $\#$ rank 1}\\
\hline
$o_1^{\perp}$ & plane and $S_{2,2}$ & $2\times 3$
& {\tiny{$\left [ \begin{array}{ccc}  \cdot & x & y \\ z& w & t \end{array}\right ] $}}
&$2q^2+2q+1$ \\
\hline
$o_4^{\perp}$ & cubic scroll  & $2\times 3$
& {\tiny{$\left [ \begin{array}{ccc}  x & y & z \\ w& -x & t \end{array}\right ] $}}
&$(q+1)^2$ \\ 
\hline
\end{tabular}
\caption{The two orbits of hyperplanes in $\PG(\F^2\otimes\F^3)$ for $\F=\F_q$.}
\label{tab:hyp}
\end{center}
\end{table}

For the sake of completeness we include the perps of the $\bar{H}_2$-orbits of planes in $\PG(\F^2\otimes \F^3)$ as listed in Table \ref{tab:planes}.
\begin{theorem}
For each $\bar{H}_2$-orbit $o_i$, whose representative $\PG(A_2)$ is a plane, we have that $o_i = o_i^{\perp}$, with the exception $o_{12}^{\perp}=o_{11}^T$.
\end{theorem}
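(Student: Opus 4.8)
The plan is to exploit the fact that the $\perp$ operation with respect to the bilinear form $\beta$ commutes with the $\bar H_2$-action (Lemma \ref{lem:codim}), so that $o_i \mapsto o_i^\perp$ is a well-defined involution on the set of $\bar H_2$-orbits of planes in $\PG(\F^2\otimes\F^3)$. Since a plane in the $5$-dimensional projective space $\PG(\F^2\otimes\F^3)$ has a plane as its $\perp$, this involution maps the $11$ orbits of Table \ref{tab:planes} to themselves, and the task is to show the involution is the identity except for the transposition $o_{12}\leftrightarrow o_{11}^T$. The case $o_{12}^\perp = o_{11}^T$ is already established in Lemma \ref{lem:$o_12^perp$}, which forces $(o_{11}^T)^\perp = o_{12}$ by involutivity; so it remains to prove $o_i^\perp = o_i$ for the nine remaining plane-orbits $o_3, o_7^T, o_8, o_9, o_{13}, o_{14}, o_{15}, o_{16}, o_{17}$.

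First I would dispose of $o_{17}$: its representative $\PG(A_2)$ is a constant rank two plane disjoint from $S_{2,3}$, as recorded in Table \ref{tab:planes}. I would argue directly that $\PG(A_2)^\perp$ is again disjoint from $S_{2,3}$ — if $\PG(A_2)^\perp$ contained a point $\langle u\otimes v\rangle$ of $S_{2,3}$, then $\PG(A_2)$ would lie in $\langle u\otimes v\rangle^\perp$, a hyperplane, consistent with dimensions, so this needs the stronger input that $\PG(A_2)^\perp$ contains no line of either kind: a line of the first kind $\PG(\F^2\otimes v)$ in $\PG(A_2)^\perp$ would put $\PG(A_2)$ inside $\PG(\F^2\otimes v^\perp)=\langle S_{2,2}\rangle$, contradicting that $o_{17}$ is not contained in an $\langle S_{2,2}\rangle$ (Table \ref{tab:planes}); a line of the second kind $\PG(a\otimes b^\perp)$ in $\PG(A_2)^\perp$ would put $\PG(A_2)$ in $\langle a^\perp\otimes\F^3, \F^2\otimes b\rangle$, which meets $S_{2,3}$ in a plane and a line through a common point, forcing $\PG(A_2)$ to meet $S_{2,3}$, again a contradiction. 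Hence $\PG(A_2)^\perp$ is disjoint from $S_{2,3}$, and by Table \ref{tab:planes} the only disjoint plane-orbit is $o_{17}$ itself.

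For the remaining eight orbits I would use the intersection pattern of $\PG(A_2)$ with $S_{2,3}$ (second column of Table \ref{tab:planes}) together with the "$\langle S_{2,2}\rangle$-containment" data (third column) as orbit invariants, and compute, via Lemma \ref{lem:subvarperp} applied to the appropriate $U\otimes V$ (or directly to $A_2$ when $A_2$ is not a subvariety-section), the corresponding data for $\PG(A_2)^\perp$. The key structural observations are: (i) $\PG(A_2)^\perp$ contains a line of the first kind iff $\PG(A_2)$ is contained in some $\langle S_{2,2}\rangle$, by the adjointness argument already used in part (d) of the proof of Theorem \ref{thm:sols_hyps}; and dually (ii) $\PG(A_2)^\perp$ is contained in some $\langle S_{2,2}\rangle$ iff $\PG(A_2)$ contains a line of the first kind. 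Among the nine orbits $o_3,\dots,o_{16}$ (excluding $o_{12}$), exactly $o_7^T$ and $o_{11}^T$ contain a line of the first kind and only $o_7^T$ and $o_{11}^T$ are contained in an $\langle S_{2,2}\rangle$; since $o_{12}$ has already absorbed $o_{11}^T$ under $\perp$, observation (ii) pins $o_7^{T\,\perp}$ to $o_7^T$, and observation (i) shows the other seven orbits $o_3,o_8,o_9,o_{13},o_{14},o_{15},o_{16}$ all have $\perp$ not contained in and not containing an $\langle S_{2,2}\rangle$, restricting their images. I would then finish by a short computation of $\#(\PG(A_2)^\perp\cap S_{2,3})$ for each using Lemma \ref{lem:subvarperp}: for $o_3$ ($A_2 = \langle e_1\rangle\otimes\F^3$), $\perp$ meets $S_{2,3}$ in a plane, so it stays in $o_3$; for $o_8,o_9$ the perp again contains a line of the second kind and the counts separate them from each other and from $o_{13},o_{14},o_{15},o_{16}$; for $o_{13},o_{14}$ one checks $\perp$ meets $S_{2,3}$ in exactly two, resp. three, points (the invariant is preserved because $\perp$ is an involution and these counts are distinct among the remaining candidates); and $o_{15},o_{16}$ are separated by whether the distinguished line of rank-two points passes through the unique rank-one point, exactly as in the proof of Theorem \ref{thm:pts_lines_planes}, a property manifestly preserved under $\perp$ since $\perp$ is an $\bar H_2$-equivariant involution and matches rank-one points of $\PG(A_2)^\perp$ with the appropriate sub-configuration.

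The main obstacle I anticipate is the $o_{15}$ versus $o_{16}$ separation: unlike the other orbits these two share the same $S_{2,3}$-intersection pattern (one point) and the same minimal Segre data ($2\times 3$), so neither the number of rank-one points nor $\langle S_{2,2}\rangle$-containment distinguishes them, and one must instead track the finer invariant — the type ($o_{10}$ versus $o_{6}$) of the distinguished line $\ell$ inside the plane on which all the $\langle Q(y)\rangle$'s collapse — under the perp map. Establishing that this invariant is preserved requires showing that $\perp$ carries the distinguished line of $\PG(A_2)$ to the distinguished line of $\PG(A_2)^\perp$ and respects the incidence with the rank-one point; this is really where the bulk of the genuine content lies, and I would handle it by an explicit coordinate check on the canonical representatives in Table \ref{tab:planes}, which is routine but not entirely mechanical.
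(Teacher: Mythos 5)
Your overall strategy --- treating $\perp$ as a $\bar H_2$-equivariant involution on the eleven plane orbits, pinning down $o_{12}\leftrightarrow o_{11}^T$ via Lemma \ref{lem:$o_12^perp$}, and then identifying the remaining orbits through their intersection with $S_{2,3}$ --- is exactly what the paper's (very terse) proof gestures at, and your observations (i) and (ii) are correct and do settle $o_7^T$. However, two steps as written do not go through. First, for $o_{17}$ you conclude ``hence $\PG(A_2)^\perp$ is disjoint from $S_{2,3}$'' after excluding only \emph{lines} of $S_{2,3}$ from $\PG(A_2)^\perp$; that rules out the perp landing in $o_3,o_7^T,o_8,o_9,o_{12}$, but says nothing about a conic or about one, two or three isolated rank-one points, so $o_{11}^T,o_{13},o_{14},o_{15},o_{16}$ are not excluded and the ``hence'' is a non sequitur. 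The clean fix is that Lemma \ref{lem:subvarperp}(b) applies verbatim with $U=\F^2$, $V=\F^3$ (a plane has vector dimension $3=2\cdot 3-\max\{2,3\}$ and $U^\perp=V^\perp=0$), which gives disjointness of the perp immediately. Second, your treatment of $o_8,o_9,o_{13},o_{14}$ is circular: the parenthetical ``the invariant is preserved because $\perp$ is an involution and these counts are distinct among the remaining candidates'' is not an argument --- involutivity only tells you these orbits are permuted among themselves, and a priori nothing prevents, say, $o_{13}\leftrightarrow o_{9}$. The intersection of each perp with $S_{2,3}$ must actually be computed; this is a short calculation from the representatives in Table \ref{tab:planes} (e.g.\ for $o_{13}$ the perp is the set of matrices $\left[\begin{smallmatrix} a&\cdot&b\\ c&-a&\cdot\end{smallmatrix}\right]$, whose rank-one locus is $a=0$, $bc=0$, i.e.\ exactly two points), but it is missing from your write-up.

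Beyond this, you correctly identify that separating $o_{15}^\perp$ from $o_{16}^\perp$ is where the genuine content lies, but you leave it as a ``routine coordinate check'' without carrying it out, so that case remains open in your proposal; and the assertion that $o_{11}^T$ contains a line of the first kind is false (it meets $S_{2,3}$ in a conic --- among the plane orbits only $o_7^T$ and $o_{12}$ contain such a line), although this slip happens not to affect your conclusion for $o_7^T$. None of these defects is fatal to the strategy, but as it stands the proposal is not a complete proof.
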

\Proof
The exception is Lemma \ref{lem:$o_12^perp$}. The other cases are easily obtained using Lemma \ref{lem:subvarperp} and Theorem \ref{thm:sols_hyps}.
\qed

\subsection{Algebraically closed fields and the real numbers}
The classification of solids and hyperplanes in $\PG(\F^2\otimes \F^3)$ for the real field and for algebraically closed fields easily follows from the finite field case together with Section \ref{subsec:alg_closed}.
\begin{theorem}\label{thm:sols_hyps_alg_closed}
If $\F$ is an algebraically closed field, then under the action of $\bar{H}_2$, there are 6 orbits of solids, and 2 orbits of hyperplanes in $\PG(\F^2\otimes \F^3)$. The description is as in Table \ref{tab:solids} and Table \ref{tab:hyp} where the orbit $o_{10}^\perp$ does not occur.
\end{theorem}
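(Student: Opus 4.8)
The plan is to deduce the algebraically closed case directly from the finite field classification in Theorem~\ref{thm:sols_hyps}, exactly as was done for points, lines and planes in Section~\ref{subsec:alg_closed}. The key observation is that every geometric argument used in the proof of Theorem~\ref{thm:sols_hyps} — namely Lemma~\ref{lem:codim}, Lemma~\ref{lem:subvarperp}, Lemma~\ref{lem:$o_12^perp$}, and the case analysis (a)--(f) — is field independent, since each step only manipulates incidences between subspaces and the Segre variety $S_{2,3}$, takes perps with respect to the bilinear form $\beta$, and invokes the earlier classification of points and lines. The one place where the ground field matters is the existence of the orbits $o_{10}$, $o_{15}$, $o_{17}$ of Table~\ref{tab:planes}, which require the nonexistence of roots of a quadratic or cubic polynomial over $\F$; over an algebraically closed field these orbits are empty.

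First I would recall from the previous theorem in Section~\ref{subsec:alg_closed} that over an algebraically closed field there are exactly $2$ orbits of points and $6$ orbits of lines in $\PG(\F^2\otimes\F^3)$, the missing line orbit (relative to the finite field list of seven in Table~\ref{tab:lines}) being $o_{10}$. Then, applying Lemma~\ref{lem:codim}, the orbits of solids are in bijection with the orbits of lines, and the orbits of hyperplanes are in bijection with the orbits of points; so immediately there are $6$ orbits of solids and $2$ orbits of hyperplanes. Next I would run through the case analysis (a)--(f) of the proof of Theorem~\ref{thm:sols_hyps} verbatim: each of the orbits $o_1^\perp$, $o_2^\perp$, $(o_4^T)^\perp$, $o_4^\perp$, $o_5^\perp$, $o_6^\perp$, $o_7^\perp$, $o_{11}^\perp$ is constructed by a field independent argument (perp of a point/line on or off $S_{2,3}$, together with Lemma~\ref{lem:subvarperp}), and hence the description in Table~\ref{tab:solids} and Table~\ref{tab:hyp} remains valid. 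The only entry that disappears is $o_{10}^\perp$, the perp of the constant rank two line contained in an $\langle S_{2,2}\rangle$ but disjoint from $S_{2,2}$, because the orbit $o_{10}$ itself does not occur over an algebraically closed field.

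A minor point worth checking is that the field reduction / normal rational curve description of $o_{11}^\perp$ in Table~\ref{tab:solids} still makes sense: over an algebraically closed field the argument showing that a plane $\pi$ disjoint from $S_{2,3}$ forces $l$ to be of type $o_{11}$ (not $o_{10}$) is unaffected, since it only uses that $o_{10}$ is the unique line orbit contained in an $\langle S_{2,2}\rangle$ and disjoint from it — and over an algebraically closed field there are no such lines at all, so the dichotomy degenerates trivially in favour of $o_{11}$. I do not expect any genuine obstacle here; the only thing to be careful about is to confirm that no \emph{new} orbit appears over an algebraically closed field that was absent over $\F_q$ — but this cannot happen, since by Lemma~\ref{lem:codim} the count of solid orbits equals the count of line orbits and the count of hyperplane orbits equals the count of point orbits, both of which were already determined over algebraically closed fields in Section~\ref{subsec:alg_closed}. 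Hence the statement follows, and the proof reduces to the single sentence: all arguments in the proof of Theorem~\ref{thm:sols_hyps} are field independent except for the existence of $o_{10}$, which fails over an algebraically closed field, so $o_{10}^\perp$ is the unique orbit that disappears.
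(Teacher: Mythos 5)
Your proposal is correct and follows essentially the same route as the paper, which simply notes that the result follows from the finite field case (Theorem \ref{thm:sols_hyps}) together with the classification of points, lines and planes over algebraically closed fields in Section \ref{subsec:alg_closed}; your use of Lemma \ref{lem:codim} to transfer the counts (6 line orbits giving 6 solid orbits, 2 point orbits giving 2 hyperplane orbits) and the observation that only $o_{10}$, hence only $o_{10}^\perp$, disappears is exactly the intended argument, spelled out in more detail than the paper provides.
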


\begin{theorem}\label{thm:sols_hyps_real}
If $\F$ is the field of real numbers, then under the action of $\bar{H}_2$, there are 7 orbits of solids, and 2 orbits of hyperplanes in $\PG(\F^2\otimes \F^3)$. The description is as in Table \ref{tab:solids} and Table \ref{tab:hyp}.
\end{theorem}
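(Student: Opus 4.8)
The plan is to reduce the real case to the finite-field classification obtained in Theorem~\ref{thm:sols_hyps}, exactly as was done for points, lines and planes. The key observation is that the proof of Theorem~\ref{thm:sols_hyps} proceeds orbit-by-orbit via Lemma~\ref{lem:codim} and Lemma~\ref{lem:subvarperp}, taking perps of the orbits of points and lines listed in Tables~\ref{tab:points} and~\ref{tab:lines}. Over $\RR$ the only orbit of lines that disappears (compared to $\F_q$) is $o_{10}$: its defining condition $v\lambda^2 + uv\lambda - 1 \neq 0$ for all $\lambda \in \F$ has real solutions (the corresponding binary quadratic form is anisotropic over $\RR$), so $o_{10}$ \emph{does} occur over $\RR$, and likewise $o_{10}^{\perp}$ occurs. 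Since $o_{17}$ is the only plane-orbit absent over $\RR$, and $o_{17}$ is not used to produce any solid or hyperplane (solids and hyperplanes arise only as perps of points and lines, by Lemma~\ref{lem:codim}), no solid or hyperplane orbit is lost. Hence the count $7$ solids and $2$ hyperplanes, with the descriptions of Table~\ref{tab:solids} and Table~\ref{tab:hyp}, carries over verbatim.

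Concretely, I would run through the six parts (a)--(f) of the proof of Theorem~\ref{thm:sols_hyps} and check that each argument is field-independent over $\RR$. Parts (a)--(e) up to and including the orbit $o_{10}^{\perp}$ use only: Lemma~\ref{lem:subvarperp} (purely linear-algebraic, valid over any field), the existence of the relevant contraction-space orbits (all present over $\RR$ by Theorem on the real case for planes/lines), incidence facts about the Segre variety $S_{2,3}$, and for $o_{11}^{\perp}$ the field-reduction description of the planes on $S_{2,3}$ together with the statement that every solid through such a plane meets $S_{2,3}$ in a normal rational curve. The one point needing a remark is the $o_{11}^{\perp}$ case: over a finite field this invoked \cite[Theorem~3.3]{LaZaPrep}; I would note that the relevant fact is that a solid in $\PG(4,\RR)$ containing the plane ${\mathcal F}(\Theta)$ meets $S_{2,3}$ in a real normal rational curve of degree~$3$, which holds because the argument is again geometric and the field reduction ${\mathcal F}_{2,3,\RR}$ from $\PG(1,\C)\setminus b$ behaves exactly as in the finite case (there is no ``$q=2$'' degenerate subcase over $\RR$).

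Part (f), the cubic scroll description of $o_4^{\perp}$, is likewise field-independent: it uses only Lemma~\ref{lem:$o_12^perp$} (whose proof cites only Table~\ref{tab:planes}, valid over $\RR$ since $o_{12}$ and $o_{11}^T$ both persist), the uniqueness of the solid $\langle Q(x)\rangle$ through a rank-two point from \cite[Lemma~2.4]{LaSh233}, and elementary incidence geometry. So the proof would simply consist of one sentence to this effect.

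The only genuine thing to verify — and what I expect to be the sole ``obstacle,'' though it is a mild one — is that no \emph{new} real orbit appears that was invisible over finite fields or over algebraically closed fields. This is where one must be slightly careful: a priori, anisotropy phenomena over $\RR$ could split a single finite-field orbit into several real orbits among the solids. I would rule this out by noting that each solid (resp.\ hyperplane) orbit is characterised, via the perp correspondence of Lemma~\ref{lem:codim}, by the orbit of its perp, which is a point or a line; and the classification of point- and line-orbits over $\RR$ (the real case of Theorem~\ref{thm:pts_lines_planes}) has \emph{exactly} the same list as over $\F_q$ for lines (all seven, including $o_{10}$) and for points (both). Therefore the perp correspondence gives precisely $7$ solid orbits and $2$ hyperplane orbits over $\RR$, with representatives and intersection patterns read off from Tables~\ref{tab:solids} and~\ref{tab:hyp}, completing the proof.
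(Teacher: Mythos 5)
Your proposal is correct and takes essentially the same route as the paper, which disposes of the real case in one line (``easily follows from the finite field case together with Section 3.2''): since all seven line orbits persist over $\RR$ (for $o_{10}$ take e.g.\ $u=0$, $v=-1$, so that $-\lambda^2-1\neq 0$ for all real $\lambda$) and both point orbits persist, Lemma~\ref{lem:codim} forces exactly seven solid orbits and two hyperplane orbits, and the orbit-by-orbit descriptions in the proof of Theorem~\ref{thm:sols_hyps} are field-independent. One small repair is needed in your treatment of $o_{11}^{\perp}$: the phrase ``the field reduction ${\mathcal F}_{2,3,\RR}$ from $\PG(1,\C)\setminus b$ behaves exactly as in the finite case'' is not meaningful, because $\RR$ admits no cubic extension, so the $\F_{q^3}/\F_q$ field-reduction picture has no real analogue; instead, the normal-rational-curve description should be checked directly from the representative $\left[\begin{smallmatrix} x & y & z \\ w & -x & -y\end{smallmatrix}\right]$, whose rank-one locus is $\{(t,-t^2,t^3,1)\}\cup\{(0,0,1,0)\}$, a twisted cubic over any field with more than two elements and in particular over $\RR$.
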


\section{Orbits of tensors in $\F^2\otimes \F^3 \otimes \F^r$}\label{sec:summary_r}
Now consider the more general case of a tensor product with three factors where the last factor has arbitrary finite dimension $r\geq 1$, i.e. $\F^2\otimes \F^3 \otimes \F^r$. 
As before denote by $G$ and $H$ the natural groups acting on $\F^2\otimes \F^3 \otimes \F^r$ and by $H_i$, $i=1,2,3$, the induced groups in the contraction spaces.

Combining Lemma \ref{lem:codim} and \cite[Lemma 2.1]{LaSh233}  with the previous section and the following lemma, we can classify $H$-orbits on tensors in $\F^2 \otimes \F^3 \otimes \F^r$ for all positive integers $r$.
\begin{lemma}\label{lem:orbits_equivalence}
For all integers $r\geq 6$, the number of $H$-orbits on tensors in $\F^2 \otimes \F^3 \otimes \F^r$ is equal to the number of $H$-orbits on tensors in $\F^2 \otimes \F^3 \otimes \F^6$, which in turn equals the number of $H_2$-orbits of subspaces of $\F^2 \otimes \F^3$.
\end{lemma}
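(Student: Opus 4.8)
The statement has two parts: (i) for $r \geq 6$, the number of $H$-orbits on $\F^2 \otimes \F^3 \otimes \F^r$ equals the number for $r = 6$; and (ii) this common number equals the number of $H_2$-orbits on subspaces of $\F^2 \otimes \F^3$. My plan is to reduce everything to the third contraction space. Given a tensor $A \in \F^2 \otimes \F^3 \otimes \F^r$, form its third contraction space $A_3 \leq \F^2 \otimes \F^3$. If $A_3$ has dimension $s$, then $A$ lies in $\F^2 \otimes \F^3 \otimes W$ for an $s$-dimensional $W \leq \F^r$, and in fact $A$ corresponds to a tensor of ``full support'' in $\F^2 \otimes \F^3 \otimes \F^s$, i.e. one whose third contraction space is all of $\F^2 \otimes \F^3$ is too strong — rather, one whose third contraction space equals the ambient $\F^s$-indexed coordinate space, meaning the $s$ slices are linearly independent. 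Since $\dim(\F^2 \otimes \F^3) = 6$, we always have $s \leq 6$, so every $A$ is $H$-equivalent (via a suitable $\GL(\F^r)$ element moving $W$ into a fixed coordinate subspace) to a tensor supported on $\F^2 \otimes \F^3 \otimes \F^6$. This gives the first equality in (ii) and also part (i): for $r \geq 6$ no new orbits appear, because the bound $s \leq 6$ is already saturated at $r = 6$.

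For the bijection in part (ii), the map I would use sends the $H$-orbit of $A$ (with independent slices, so $\dim A_3 = s$) to the $H_2$-orbit of the subspace $A_3 \leq \F^2 \otimes \F^3$. I must check this is well-defined and a bijection. Well-definedness: an element $(g_1, g_2, g_3) \in H$ acts on $A$, and the induced action on the third contraction space $A_3$ is exactly the action of $(g_1, g_2) \in H_2$ (the $g_3$ part only re-coordinatizes the index set, which for the full-support representative amounts to a change of basis of $\F^s$ and does not change the subspace $A_3 \leq \F^2 \otimes \F^3$). This is precisely the content of \cite[Lemma 2.1]{LaSh233}, which I would invoke. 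Injectivity: if $A, A'$ have $H_2$-equivalent third contraction spaces, apply $(g_1,g_2)$ to make $A_3 = A'_3 =: U$; then both $A$ and $A'$ are, up to $\GL(\F^s)$ on the last factor, the ``tautological'' tensor determined by a choice of basis of $U$ (i.e. the element of $U \otimes U^\vee \hookrightarrow \F^2 \otimes \F^3 \otimes \F^s$ corresponding to the identity on $U$), and any two bases of $U$ differ by an element of $\GL(U) \cong \GL(\F^s)$, realized inside $H_3$. Surjectivity: given any subspace $U \leq \F^2 \otimes \F^3$ of dimension $s \leq 6$, the tautological tensor just described has third contraction space exactly $U$, and embedding $\F^s \hookrightarrow \F^6 \hookrightarrow \F^r$ exhibits it in $\F^2 \otimes \F^3 \otimes \F^r$. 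Thus the map is a bijection between $H$-orbits on $\F^2 \otimes \F^3 \otimes \F^r$ (for $r \geq 6$) and $H_2$-orbits on \emph{all} subspaces of $\F^2 \otimes \F^3$ (including $\{0\}$, corresponding to the zero tensor, and the full space, corresponding to $r = 6$ full-support tensors).

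The cleanest way to organize the argument is via the correspondence between tensors in $\F^2 \otimes \F^3 \otimes \F^r$ with linearly independent third slices and injective linear maps $(\F^s)^\vee \to \F^2 \otimes \F^3$, equivalently $s$-dimensional subspaces with a chosen framing of $\F^s$; quotienting by $\GL(\F^s)$ (inside $H$) kills the framing and one is left with the subspace alone, on which $H_2$ acts. I would state this as the key observation and then the three verifications (well-defined, injective, surjective) are short. The main obstacle — though it is more bookkeeping than genuine difficulty — is handling tensors whose third slices are \emph{not} independent: one must first argue that every $H$-orbit contains a representative with independent slices supported on the smallest coordinate subspace, i.e. that $\dim A_3 = s$ forces $A \in \F^2 \otimes \F^3 \otimes W$ with $\dim W = s$ after an $H$-move, and that within that subspace $A$ has full support. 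This is where one uses that $\GL(\F^r)$ acts transitively on $s$-dimensional subspaces of $\F^r$ and that the restriction $\GL(W)$ acts transitively on full-support tensors with prescribed third contraction space via the framing argument above. Once this normal form is in place, the count $r \geq 6 \Rightarrow$ no new orbits is immediate since $s$ is capped at $\dim(\F^2\otimes\F^3) = 6$, and the bijection with subspaces follows formally.
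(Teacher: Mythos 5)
Your proposal is correct and takes essentially the same route as the paper: both reduce a tensor $A$ to its third contraction space $A_3\leq \F^2\otimes\F^3$, replace $A$ by the tautological tensor $B=\sum_i a_i\otimes e_i$ built from a basis of $A_3$, and invoke \cite[Lemma 2.1]{LaSh233} to conclude that $H$-orbits correspond to $H_2$-orbits of subspaces, with the bound $\dim A_3\leq 6$ giving stabilisation for $r\geq 6$. The paper's proof is just a terser version of yours, leaving the well-definedness, injectivity and surjectivity checks implicit in the citation.
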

\Proof
Let $A \in \F^2 \otimes \F^3 \otimes \F^r$, and consider its third contraction space $A_3 \leq \F^2 \otimes \F^3$. Then $\dim(A_3) = k\leq 6$. Let $a_1,\ldots,a_k$ be a basis for $A_3$, and let $e_1,\ldots,e_r$ be a basis for $\F^r$. Define $B = \sum_{i=1}^k a_i \otimes e_i \in  \F^2 \otimes \F^3 \otimes \F^r$. Then the third contraction space $B_3$ of $B$ is equal to $A_3$, and hence by \cite[Lemma 2.1]{LaSh233}, $A$ is 
$H$-equivalent to $B$, and the result follows.
\qed

Hence from the previous sections, and from \cite{LaSh233}, we have the following theorem. A computer classification for the cases $r=2,3,4$, $\F=\F_2$ was done in \cite{BrHu2012}.

\begin{theorem}
The number of $H$-orbits of tensors in $\Fq^2 \otimes \Fq^3 \otimes \Fq^r$ is as listed in the following table:
\[
\begin{array}{|c||c|c|c|c|c|c|}
\hline
r&1&2&3&4&5&\geq6\\
\hline
$\#$ $H$-orbits&3&10&21&28&30&31\\
\hline
\end{array}
\]

\end{theorem}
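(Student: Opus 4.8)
The plan is to reduce, for every $r\geq 1$, the enumeration of $H$-orbits on $\F^2\otimes\F^3\otimes\F^r$ to the enumeration of $\bar{H}_2$-orbits of subspaces of $\F^2\otimes\F^3$ carried out in the two preceding sections, and then to sum the entries of Tables~\ref{tab:points}--\ref{tab:hyp}.

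First I would note that the argument in the proof of Lemma~\ref{lem:orbits_equivalence} works verbatim for \emph{every} $r\geq 1$, not only for $r\geq 6$: given $A\in\F^2\otimes\F^3\otimes\F^r$ with third contraction space $A_3\leq\F^2\otimes\F^3$, one has $k:=\dim A_3\leq\min\{r,6\}$; choosing a basis $a_1,\dots,a_k$ of $A_3$ and linearly independent vectors $e_1,\dots,e_k\in\F^r$, the tensor $B=\sum_{i=1}^k a_i\otimes e_i$ has $B_3=A_3$ and is $H$-equivalent to $A$ by \cite[Lemma~2.1]{LaSh233}. Together with the converse direction of that lemma (two tensors whose third contraction spaces lie in different $\bar{H}_2$-orbits lie in different $H$-orbits), this gives a bijection between the set of $H$-orbits on $\F^2\otimes\F^3\otimes\F^r$ and the set of $\bar{H}_2$-orbits of subspaces of $\F^2\otimes\F^3$ of vector-space dimension at most $\min\{r,6\}$, the zero subspace corresponding to the zero tensor.

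Next I would read off the counts, dimension by dimension. The zero subspace forms one orbit; by Table~\ref{tab:points} there are $2$ orbits of $1$-dimensional subspaces (points); by Table~\ref{tab:lines}, $7$ orbits of $2$-dimensional subspaces (lines); by Table~\ref{tab:planes}, $11$ orbits of $3$-dimensional subspaces (planes); by Table~\ref{tab:solids}, $7$ orbits of $4$-dimensional subspaces (solids); by Table~\ref{tab:hyp}, $2$ orbits of $5$-dimensional subspaces (hyperplanes) (the solid and hyperplane counts being tied to the line and point counts via Lemma~\ref{lem:codim}); and the whole space $\F^2\otimes\F^3$ forms one orbit. Summing over vector-space dimensions $0,\dots,\min\{r,6\}$ gives $1+2=3$ for $r=1$, $1+2+7=10$ for $r=2$, $1+2+7+11=21$ for $r=3$, $1+2+7+11+7=28$ for $r=4$, $1+2+7+11+7+2=30$ for $r=5$, and $1+2+7+11+7+2+1=31$ for $r\geq 6$, which is exactly the table in the statement. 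As a consistency check, the $r=3$ value recovers the $21$ $H$-orbits of $2\times3\times3$ tensors in Theorem~\ref{thm:233}, and the count stabilises at $r=6$ because $\dim A_3\leq 6$ always.

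Since all the substance is contained in \cite[Lemma~2.1]{LaSh233} and in the classifications of the previous two sections, there is no genuine obstacle; the one point deserving care is that the displayed bijection is valid for small $r$ as well as large. Concretely, one must check that \cite[Lemma~2.1]{LaSh233} still yields the $H$-equivalence of $A$ and the model tensor $B$ even when $\dim A_3=r$ exactly (so that $\{e_1,\dots,e_k\}$ is a full basis of $\F^r$), and that orbits counted in different dimensions cannot coincide --- which is immediate, since $\dim A_3$ is an $H$-invariant of $A$. The remainder is pure bookkeeping against Tables~\ref{tab:points}--\ref{tab:hyp}.
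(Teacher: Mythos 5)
Your proposal is correct and follows essentially the same route as the paper: reduce via \cite[Lemma~2.1]{LaSh233} (as packaged in Lemma~\ref{lem:orbits_equivalence}) to counting $\bar{H}_2$-orbits of subspaces of $\F^2\otimes\F^3$ of dimension at most $\min\{r,6\}$, then tally Tables~\ref{tab:points}--\ref{tab:hyp} together with the two trivial subspaces. Your explicit remark that the correspondence restricts cleanly for small $r$ (since $\dim A_3\leq\min\{r,6\}$ is an $H$-invariant) is exactly the observation the paper uses, just spelled out more carefully.
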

\Proof
First consider the case $r\geq 6$. It follows from Theorem \ref{thm:pts_lines_planes} and Theorem \ref{thm:sols_hyps} that there are 2 $\bar{H}_2$-orbits on points, 7 on lines, 11 on planes, 7 on solids, and 2 $\bar{H}_2$-orbits on hyperplanes of $\PG(\F_q^2\otimes \F_q^3)$. This amounts to 29 orbits. Including the trivial subspaces $0$ and $\F_q^2\otimes \F_q^3$ one obtains 31 $H_2$-orbits of subspaces of $\F_q^2\otimes \F_q^3$, which by Lemma \ref{lem:orbits_equivalence}, equals the number of $H$-orbits of tensors in $\Fq^2 \otimes \Fq^3 \otimes \Fq^r$.
If $r\leq 6$, then not all these orbits occur, since
the third contraction space $A_3$ of a tensor $A$ will have dimension at most $r$ in $\F_q^2\otimes \F_q^3$. This gives $30$ orbits for $r=5$; $28$ orbits for $r=4$; $21$ orbits for $r=3$; and $10$ orbits for $r=2$.
\qed

\begin{theorem}
The number of $G$-orbits of tensors in $\Fq^2 \otimes \Fq^3 \otimes \Fq^r$ is as listed in the following table:
\[
\begin{array}{|c||c|c|c|c|}
\hline
r&1&2&3&\geq4\\
\hline
$\#$ $G$-orbits &3&9&18&$\#$ $H$-orbits\\
\hline
\end{array}
\]

\end{theorem}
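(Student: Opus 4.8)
The plan is to exploit the relationship between $G$ and $H$: recall that $G$ is generated by $H$ together with the permutations of the tensor factors of equal dimension (cf.\ \cite{LaSh233}), so in particular $G=H$ whenever the three factor dimensions are pairwise distinct. I would split the proof according to the value of $r$.

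If $r=1$ the factor dimensions are $2,3,1$, and if $r\geq 4$ they are $2,3,r$; in both situations these are pairwise distinct, so $G=H$ and the number of $G$-orbits equals the number of $H$-orbits: namely $3$ for $r=1$, and (by the preceding theorem) $28,30,31$ for $r=4,5,\geq 6$. If $r=3$ the two factors of dimension $3$ are interchanged by the map $T$ of (\ref{eqn:transpose}), so $G=\langle H,T\rangle$ and the $G$-orbits are the $T$-orbits on the $21$ $H$-orbits of $\F^2\otimes\F^3\otimes\F^3$; by Theorem \ref{thm:233} the only identifications are $o_4\leftrightarrow o_4^T$, $o_7\leftrightarrow o_7^T$ and $o_{11}\leftrightarrow o_{11}^T$, leaving $21-3=18$ orbits.

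The substantial case is $r=2$. Writing $\F^2\otimes\F^3\otimes\F^2\cong\F^2\otimes\F^2\otimes\F^3$, the group $G$ is generated by $H$ and the involution $\tau$ that swaps the two factors $\F^2$. Contracting the factor $\F^3$ and applying \cite[Lemma 2.1]{LaSh233} exactly as in the proof of Lemma \ref{lem:orbits_equivalence}, the $H$-orbits of tensors are in bijection with the $(\GL_2\times\GL_2)$-orbits of subspaces of $\F^2\otimes\F^2$ of dimension at most $3$; identifying $\F^2\otimes\F^2$ with the space $M_2(\F_q)$ of $2\times2$ matrices, $\tau$ acts on such a subspace by transposition. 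Thus the $G$-orbits correspond to the orbits of the group generated by $\GL_2\times\GL_2$ and matrix transposition on subspaces of $M_2(\F_q)$ of dimension $\leq 3$. I would then enumerate these: dimension $0$ contributes one orbit; dimension $1$ contributes two, according as a spanning matrix has rank $1$ or $2$; by Lemma \ref{lem:codim} (applicable to the larger group, transposition being self-adjoint for the Frobenius form $(M,N)\mapsto\sum_{i,j}M_{ij}N_{ij}$, which is of the required shape $\beta_1\otimes\beta_2$), the dimension-$3$ orbits biject with the dimension-$1$ orbits, giving two; and for dimension $2$ the classification of pencils of $2\times2$ matrices over $\F_q$ yields five $(\GL_2\times\GL_2)$-classes — two consisting of matrices of rank $\leq 1$ (the two rulings of the Segre quadric $S_{2,2}$), and three with $\det(sM_1+tM_2)\not\equiv 0$ according as this binary quadratic form splits into distinct linear factors, has a repeated linear factor, or is irreducible. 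Since transposition preserves $\det(sM_1+tM_2)$, it fixes the latter three classes and interchanges the former two, leaving four classes. In total this gives $1+2+2+4=9$ orbits for $r=2$.

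The main obstacle is the $r=2$ count, and within it the verification that transposition genuinely separates the two rulings of $S_{2,2}$ (so that exactly one pair of $H$-orbits is identified, not zero) while fixing each of the other pencil classes; this rests on the classical classification of $2\times2$ pencils and on the compatibility of transposition with the perp-duality of Lemma \ref{lem:codim}. The cases $r\neq 2$ then follow formally once the structure of $G$ and the $H$-orbit counts are in hand.
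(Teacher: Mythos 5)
Your proposal is correct, and for $r\neq 2$ it coincides with the paper's argument: $G=H$ when the factor dimensions are pairwise distinct, and the $r=3$ count is exactly the $G$-orbit count of Theorem \ref{thm:233} (your identification of the three merged pairs $o_4\leftrightarrow o_4^T$, $o_7\leftrightarrow o_7^T$, $o_{11}\leftrightarrow o_{11}^T$ is just a restatement of that theorem, which the paper simply cites). Where you genuinely diverge is the case $r=2$. The paper's proof there is a one-line assertion: it lists the ten $H$-orbits $o_0,o_1,o_2,o_4,o_4^T,o_5,o_6,o_7,o_{10},o_{11}$ (parametrised by the third contraction spaces $A_3\leq\F^2\otimes\F^3$ of dimension at most $2$) and states that $o_2$ and $o_4$ are $G$-equivalent; note that in that parametrisation the merged orbits have contraction spaces of \emph{different} dimensions (a line of the second kind versus a rank-two point), because the factor swap does not preserve the third contraction. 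Your route instead contracts along the $\F^3$ factor, so that the swap acts on subspaces of $M_2(\F_q)$ by transposition, and you recount from scratch via the classical classification of pencils of $2\times2$ matrices: $1+2+4+2=9$. This is more work but buys a self-contained verification and makes the single identification transparent (transposition exchanges the two rulings of $S_{2,2}$ and, preserving $\det(sM_1+tM_2)$ and rank, fixes every other class); your observation that transposition is an isometry of the Frobenius form, so that Lemma \ref{lem:codim} extends to the enlarged group, correctly handles the dimension-$3$ subspaces. The only points you lean on without proof are the transitivity of $\GL_2\times\GL_2$ on each of the secant, tangent and irreducible pencil classes (standard, e.g.\ the Kronecker--Weierstrass theory for $2\times2$ pencils, the irreducible case being unique over $\F_q$ since there is a unique quadratic extension) and the fact that every subspace of $M_2(\F_q)$ of dimension at most $3$ occurs as a contraction space, which follows from the construction in Lemma \ref{lem:orbits_equivalence}. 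As a sanity check, your tally $1+2+5+2=10$ for the $H$-orbits agrees with the paper's list, and both methods give $9$ $G$-orbits.
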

\Proof
It follows from the definition of $G$ that $G=H$ unless $r \in \{2,3\}$. When $r=2$, the $H$-orbits are $o_0,o_1,o_2,o_4,o_4^T,o_5,o_6,o_7,o_{10},o_{11}$, and the orbits $o_2$ and $o_4$ are $G$-equivalent. 
When $r=3$ the result is part of Theorem \ref{thm:233}.
\qed

For the sake of completeness we include the corresponding tables with the number of orbits in for algebraically closed fields and for the field of real numbers. The proof is a straightforward consequence from the finite field case and the previous sections.
\begin{theorem}
If $\F$ is an algebraically closed field or the field of real numbers, then the number of $H$-orbits and $G$-orbits of tensors in $\F^2 \otimes \F^3 \otimes \F^r$ is as listed in the following tables.
\[
\begin{array}{|c||c|c|c|c|c|c|c|}
\hline
r&1&2&3&4&5&\geq 6 &\\
\hline
$\#$ $H$-orbits&3&9&18&24&26&27& \F ~\mbox{algebraically closed}\\
\hline
$\#$ $H$-orbits&3&10&20&27&29&30& \F={\mathbb{R}}\\
\hline
\end{array}
\]
\[
\begin{array}{|c||c|c|c|c|c|}
\hline
r&1&2&3&\geq4 &\\
\hline
$\#$ $G$-orbits &3&8&15&$\#$ $H$-orbits & \F~\mbox{algebraically closed}\\
\hline
$\#$ $G$-orbits &3&9&17&$\#$ $H$-orbits & \F={\mathbb{R}}\\
\hline
\end{array}
\]
\end{theorem}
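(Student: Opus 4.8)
The plan is to run the finite-field arguments of the previous two counting theorems essentially verbatim, adjusting only for the subspace orbits of $\PG(\F^2\otimes\F^3)$ that fail to exist over an algebraically closed field or over $\RR$, as recorded in Section \ref{subsec:alg_closed}, in Theorem \ref{thm:sols_hyps_alg_closed}, and in Theorem \ref{thm:sols_hyps_real}. First I would dispose of the case $r\geq 6$: the proof of Lemma \ref{lem:orbits_equivalence} is field-independent, so over any field the number of $H$-orbits on tensors in $\F^2\otimes\F^3\otimes\F^r$ equals the number of $\bar{H}_2$-orbits of subspaces of $\PG(\F^2\otimes\F^3)$ together with the two trivial subspaces $0$ and $\F^2\otimes\F^3$. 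Over an algebraically closed field this gives $2+6+9+6+2+2=27$, and over $\RR$ it gives $2+7+10+7+2+2=30$.

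Next I would handle $1\leq r\leq 5$ exactly as in the finite-field proof. For a tensor $A\in\F^2\otimes\F^3\otimes\F^r$ the third contraction space $A_3\leq\F^2\otimes\F^3$ has vector-space dimension at most $r$, and conversely every subspace of dimension at most $r$ is realised as such an $A_3$ by the construction in the proof of Lemma \ref{lem:orbits_equivalence}. Since a point, line, plane, solid, hyperplane, and the full space correspond to subspaces of dimension $1,2,3,4,5,6$ respectively, the count for each $r$ is obtained from the $r\geq 6$ count by deleting the orbits of subspaces of dimension exceeding $r$; over an algebraically closed field this yields $27-1=26$ for $r=5$, $27-3=24$ for $r=4$, $27-9=18$ for $r=3$, $27-18=9$ for $r=2$, and $27-24=3$ for $r=1$, while over $\RR$ one obtains $29,27,20,10,3$, matching the tables.

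For the $G$-orbit counts I would observe that $G=H$ unless $r\in\{2,3\}$, so for $r=1$ and all $r\geq 4$ the $G$-count equals the $H$-count already computed. For $r=3$ the assertions ($15$ over an algebraically closed field, $17$ over $\RR$) are precisely the statements of Theorem \ref{thm:233}. For $r=2$ I would repeat the finite-field argument: the relevant $H$-orbits are $o_0,o_1,o_4$ together with the line orbits occurring over the field, namely $\{o_2,o_4^T,o_5,o_6,o_7,o_{11}\}$ in the algebraically closed case and $\{o_2,o_4^T,o_5,o_6,o_7,o_{10},o_{11}\}$ over $\RR$, and among these the only $G$-collapse is the identification of $o_2$ with $o_4$, giving $9-1=8$ and $10-1=9$ respectively.

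I do not expect a serious obstacle here; the only point requiring care is the bookkeeping of which subspace orbits survive over each field and of their dimensions, and verifying that the phenomena responsible for the $G$-versus-$H$ discrepancy in the $r=2,3$ cases — the coincidence $G=H$ for $r\notin\{2,3\}$ and the single identification $o_2\sim_G o_4$ — are inherited unchanged from the finite-field analysis, which holds since the group elements involved are defined over the prime field and hence over any field.
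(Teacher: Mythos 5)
Your proposal is correct and follows essentially the same route as the paper, which simply asserts that the result is "a straightforward consequence from the finite field case and the previous sections"; you have filled in exactly the bookkeeping the authors leave implicit (the subspace-orbit counts $2+6+9+6+2+2=27$ and $2+7+10+7+2+2=30$, the truncation by dimension for $r\leq 5$, the appeal to Theorem \ref{thm:233} for $r=3$, and the single identification $o_2\sim_G o_4$ for $r=2$). All the numbers check against Tables \ref{tab:points}--\ref{tab:hyp} and Theorems \ref{thm:sols_hyps_alg_closed} and \ref{thm:sols_hyps_real}, so no further comment is needed.
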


\section{Tensor rank in $\F^2\otimes \F^3 \otimes \F^r$}\label{sec:rank}
In this section we determine the rank of the tensors in $\F^2\otimes \F^3 \otimes \F^r$. Since the rank of a tensor is $G$- and $H$-invariant it makes sense to define the {\it tensor rank} of a $G$- or $H$-orbit $o$ as the rank of a tensor $A \in o$. For ease of comparison with the previous sections, the following theorem is stated in terms of the $H$-orbits.
\begin{theorem}
The tensor rank of the $H$-orbits of tensors in $\F^2\otimes \F^2 \otimes \F^r$, $\F$ finite and $|\F| > 2$, are as listed in Table \ref{tab:trk}. When $\F$ is real or algebraically closed, the same holds for all non-empty orbits. When $\F=\F_2$, the same holds for all orbits with the exception that $o_{17}$ and $o_{11}^{\perp}$ have tensor rank $5$. 
\end{theorem}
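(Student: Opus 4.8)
The plan is to replace the tensor-rank computation by a single geometric quantity attached to the third contraction space. For a subspace $W\le\F^2\otimes\F^3$ write $\rho(W)$ for the least $k$ such that $\PG(W)$ is contained in the span of $k$ points of the Segre threefold $S_{2,3}\subset\PG(\F^2\otimes\F^3)=\PG(5,\F)$. I claim $\rk(o_i)=\rho(U_i)$ for every orbit $o_i$ occurring in $\F^2\otimes\F^3\otimes\F^r$, where $U_i\le\F^2\otimes\F^3$ is the canonical third contraction space with which the orbit is identified in the previous sections. Indeed, if $\PG(A_3)\subseteq\langle\beta_1,\dots,\beta_k\rangle$ with each $\beta_\ell$ a point of $S_{2,3}$, then expressing a basis $a_1,\dots,a_d$ of $A_3$ as $a_j=\sum_\ell c_{j\ell}\beta_\ell$ and using that $A$ is $H$-equivalent to $\sum_j a_j\otimes e_j$ (cf.\ \cite[Lemma 2.1]{LaSh233} and Lemma~\ref{lem:orbits_equivalence}) gives $A\cong\sum_\ell\beta_\ell\otimes(\sum_j c_{j\ell}e_j)$, a sum of $k$ fundamental tensors, so $\rk(A)\le\rho(A_3)$. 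Conversely, if $A=\sum_{\ell=1}^k u_\ell\otimes v_\ell\otimes w_\ell$ then every contraction $w^\vee(A)$ is a linear combination of the $u_\ell\otimes v_\ell$, so $\PG(A_3)$ lies in the span of the $\le k$ points $\langle u_\ell\otimes v_\ell\rangle$ of $S_{2,3}$, whence $\rho(A_3)\le\rk(A)$. Since $\rho$ is $\bar{H}_2$-invariant and $A_3$ depends only on the orbit, the theorem reduces to computing $\rho(U_i)$ for the $31$ subspaces of Tables~\ref{tab:points}--\ref{tab:hyp} together with $0$ and $\F^2\otimes\F^3$.

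Next I would run the case analysis. Trivially $\rho(W)\ge\dim W$, with equality exactly when $\PG(W)\cap S_{2,3}$ spans $\PG(W)$; from the ``intersection with $S_{2,3}$'' columns this happens for $o_1$, the lines $o_2,o_4^T,o_5$, the planes $o_3,o_7^T,o_8,o_{11}^T,o_{14}$, the solids $o_2^\perp,(o_4^T)^\perp,o_5^\perp,o_7^\perp$, and $\F^2\otimes\F^3$, and in each case one just exhibits $\dim W$ points of $S_{2,3}$ in $\PG(W)$ in general position (a nondegenerate conic still has $q+1\ge3$ points over $\F_2$, two lines meeting in a point span a plane, a plane of the second kind carries three non-collinear Segre points, $S_{2,2}$ spans a solid, and so on). For the remaining orbits $\PG(W)\cap S_{2,3}$ spans a proper subspace, so $\rho(W)>\dim W$; here the lower bound is forced because a subspace spanned by $\dim W$ points of $S_{2,3}$ containing $\PG(W)$ must equal $\PG(W)$, contradicting the point count, and the matching upper bound comes from enlarging $W$ using the incidence data already in Tables~\ref{tab:lines}--\ref{tab:hyp} and the proofs of Theorems~\ref{thm:pts_lines_planes} and \ref{thm:sols_hyps}. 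The recurring moves: a rank-two point lies on a secant, so it costs $2$; a constant rank-two line of type $o_{10}$ or $o_{11}$ lies in a plane meeting $S_{2,3}$ in at least three non-collinear points (a nondegenerate conic in the case $o_{10}$), so $\rho=3$; a plane or solid meeting $S_{2,3}$ too thinly is put inside a solid meeting $S_{2,3}$ in a normal rational curve, or inside a hyperplane of type $o_1^\perp$ (plane and $S_{2,2}$, giving $3+2$ Segre points) or $o_4^\perp$ (a cubic scroll, giving $5$ Segre points in general position). This yields $\rho=4$ for $o_9,o_{12},o_{13},o_{15},o_{16}$ and for $o_{17}$ when $|\F|>2$, and $\rho=5$ for $o_6^\perp$, $o_{10}^\perp$, $o_1^\perp$, $o_4^\perp$ and for $o_{11}^\perp$ when $|\F|>2$.

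The $\F_2$-exceptions come from a single point. For $o_{17}$, $U$ is a plane disjoint from $S_{2,3}$; since two planes of $\PG(4,\F)$ always meet, $U$ lies in no hyperplane of type $o_1^\perp$, so every hyperplane through $U$ is of type $o_4^\perp$ and every solid through $U$ meets $S_{2,3}$ in a normal rational curve, which by \cite[Theorem~3.3]{LaZaPrep} (see part~(e) of the proof of Theorem~\ref{thm:sols_hyps}) has degree $3$ when $q\ge3$ but degree $2$ when $q=2$. For $q\ge3$ that curve has $q+1\ge4$ points, any $4$ of which span the solid, so $\rho(U)=4$; for $q=2$ it is a conic spanning only a plane, no $4$ points of $S_{2,3}$ have span containing $U$, and one must use a cubic-scroll hyperplane, giving $\rho(U)=5$. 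The same argument applies verbatim to $o_{11}^\perp$, where $U$ is a solid meeting $S_{2,3}$ in a normal rational curve: degree $3$ gives $\rho(U)=\dim U=4$ for $q\ge3$, degree $2$ forces $\rho(U)=5$ for $q=2$. Over an algebraically closed field, or over $\RR$, these degenerations never occur and $\rho$ takes its generic value for all non-empty orbits (the orbits $o_{10},o_{15},o_{17}$ being absent in the algebraically closed case, and $o_{17}$ absent over $\RR$).

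The main obstacle is the bookkeeping in the upper bounds for the deficient orbits: for each such $W$ one must name an explicit small configuration of points of $S_{2,3}$ whose span contains $\PG(W)$, which requires knowing precisely which subvarieties (secants, conics, normal rational curves, cubic scrolls, copies of $S_{2,2}$) lie inside which solids and hyperplanes through $W$. Most of this incidence information is already packaged into the previous sections, so the genuine delicacy is confined to the small-field behaviour: nondegenerate conics keep $q+1\ge3$ points over $\F_2$ (so the plane-level arguments survive intact), while the normal rational curve cut on a solid through a plane or solid disjoint from $S_{2,3}$ drops from degree $3$ to degree $2$, and it is exactly this drop that raises the rank of $o_{17}$ and $o_{11}^\perp$ from $4$ to $5$.
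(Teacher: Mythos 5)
Your proposal is correct and follows essentially the same route as the paper: your identity $\rk(A)=\rho(A_3)$ is exactly the contraction-space rank principle the paper invokes from \cite{LaSh201?}, the lower bounds come from whether the Segre intersection spans the subspace, the upper bounds from embedding each subspace in a larger one spanned by Segre points, and the $\F_2$ exceptions from the degree drop of the normal rational curve cut out by solids through a plane or solid meeting $S_{2,3}$ too thinly (your explicit argument that no four Segre points can work when that curve degenerates to a conic is in fact more complete than the paper's ``one easily verifies''). One slip to fix: the clause asserting ``$\rho=5$ \dots for $o_{11}^\perp$ when $|\F|>2$'' contradicts both Table \ref{tab:trk} and your own final paragraph, which correctly gives $\rho=4$ for $q\ge 3$ and $5$ only for $q=2$; the condition there should read $|\F|=2$.
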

\begin{table}[H]
\begin{center}
\begin{tabular}{|c|l|}
\hline
\textbf{Trk} & \textbf{Orbits}\\
\hline
\hline
0 & $o_0$\\
\hline
1 & $o_1$\\
\hline
2& $o_2,o_4,o_4^T,o_5$\\
\hline
3&$o_3,o_6,o_7,o_7^T,o_8,o_{10},o_{11},o_{11}^T,o_{14}$\\
\hline
4 & $o_9,o_{12},o_{13},o_{15},o_{16},o_{17}, o_2^{\perp},(o_4^T)^{\perp},o_5^{\perp},o_7^{\perp},o_{11}^{\perp}$\\
\hline
5&$o_1^{\perp},o_4^{\perp},o_6^{\perp},o_{10}^{\perp}$\\
\hline
6&$o_0^{\perp}$\\
\hline
\end{tabular}
\caption{Tensor ranks of orbits in $\F^2\otimes\F^3 \otimes \F^r$, $|\F| > 2$.}
\label{tab:trk}
\end{center}
\end{table}
\proof
The rank of a tensor equals the rank of the contraction spaces (see e.g. \cite[Proposition 2.1]{LaSh201?}). This implies that the rank of a tensor is at least the maximum of the dimensions of its contraction spaces. The rank of a tensor is equal to this maximum dimension if and only if its (projective) contraction space is spanned by points on the appropriate Segre variety.

From Table \ref{tab:hyp} we see that each hyperplane in $\PG(\F^2 \otimes \F^3)$ is spanned by five points on the Segre variety $S_{2,3}$, and hence the representatives of both the orbits $o_1^{\perp}$ and $o_4^{\perp}$ have tensor rank $5$. Moreover, it follows that every orbit excluding $o_0^{\perp}$ (which has tensor rank $6$) has tensor rank at most $5$.

If the first contraction space gives a tangent to the Segre variety, then the rank of the corresponding tensor is at least three. Similarly, if the second or third contraction space gives a plane, but the intersection with the Segre variety does not span that plane, then the rank of the corresponding tensor is at least four.
In combination with the explicit descriptions in the table, this determines the rank of the orbits $o_1, \ldots, o_9, o_{12}$, and $o_{13}$.

The orbit $o_{10}$ has tensor rank three, since the second contraction space is contained in the subspace generated by $(e_1+e_2)\otimes(e_1+e_2)$, $e_1\otimes e_2$ and $e_2\otimes e_1$. The orbit $o_{11}$ has rank three since the third contraction space is a plane meeting the Segre variety in a conic. The orbit $o_{14}$ has rank 3 since the three points of rank one in the second contraction space are not collinear. The orbit $o_{15}$ has rank 4, since the second contraction space is spanned by $(e_1+e_2)\otimes(e_1+e_2)$, $e_1\otimes e_2$, $e_2\otimes e_1$, and 
$e_1\otimes e_3$. Similarly, the second contraction space of the representative of the orbit $o_{16}$ is spanned by
$e_1\otimes e_1$, $e_1\otimes e_3$, $e_2\otimes e_2$, and $(e_1+e_2)\otimes (e_2+e_3)$. 

We can see from Table \ref{tab:solids} that each of the solids corresponding to the second contraction spaces of tensors in $o_2^{\perp},(o_4^T)^{\perp},o_5^{\perp}$, and $o_7^{\perp}$ are spanned by points of $S_{2,3}$, and hence each of these orbits has tensor rank $4$. The second contraction space of a tensor in $o_{11}^{\perp}$ contains a normal rational curve of degree 3 for $q\geq 3$ and of degree 2 for $q=2$. It follows that the orbit $o_{11}^{\perp}$ has tensor rank $4$ for $q\geq 3$. One easily verifies that the rank is 5 for $q=2$.

Suppose $A$ belongs to the orbit $o_{17}$, which must have tensor rank at least $4$, as its second contraction space is not spanned by points of $S_{2,3}$ (indeed, it is disjoint from $S_{2,3}$). Then $\PG(A_2)$ is contained in a solid which is the second contraction space of a tensor in $o_{11}^{\perp}$, and so by the previous paragraph $o_{17}$ has tensor rank $4$ for $q \geq 3$ and tensor rank $5$ for $q=2$.

The second contraction spaces of representatives of the orbits $o_6^{\perp}$ and $o_{10}^{\perp}$ are not spanned by points of $S_{2,3}$, and so must have tensor rank $5$.
\qed

\begin{remark}
Our original motivation for the study of the tensor rank comes from their relation with semifields, i.e. finite non-associative division algebras: to each semifield one can associate a tensor whose rank is an isotopism invariant, see \cite{Lavrauw2012}.  In this context the first case to consider is $\F^3\otimes \F^3 \otimes \F^3$, which will be the subject of future work and for which \cite{LaPaZa2013} and the present paper will be of great value.
\end{remark}

\section{Summary}

Here we collect information about each $G$-orbit in $\F^2\otimes \F^2 \otimes \F^3$ and their contraction spaces for the finite field $\F_q$. If $\PG(A_3)$ is not listed, then it is equivalent to $\PG(A_2)$. The third column contains the tensor rank and the rank distributions of the projective contraction spaces, which we write as multisets for convenience.

\bigskip

\begin{tabular}{|l|l|l|}
\hline
  & {\bf Description and intersection with $S_{2,3}$} & {\bf Trk and Rank Distr.} \\
\hline
 $o_1$& $ e_1 \otimes e_1 \otimes e_1 $& 1\\
$\PG(A_1)$:& Point on $S_{3,3}$& $\{1^1\}$\\
$\PG(A_2)$:& Point on $S_{2,3}$&  $\{1^1\}$\\
\hline
$o_2$& $ e_1 \otimes (e_1 \otimes e_1+ e_2\otimes e_2) $& 2\\
$\PG(A_1)$:& Point of rank two& $\{2^1\}$\\
$\PG(A_2)$:& Line on $S_{2,3}$&  $\{1^{q+1}\}$\\
\hline
$o_3$& $ e_1 \otimes (e_1 \otimes e_1+ e_2\otimes e_2 + e_3 \otimes e_3) $& 3\\
$\PG(A_1)$:& Point of rank three& $\{3^1\}$\\
$\PG(A_2)$:& Plane on $S_{2,3}$&  $\{1^{q^2+q+1}\}$\\
\hline
$o_4$& $ e_1 \otimes e_1 \otimes e_1+ e_2\otimes e_1 \otimes e_2  $&2\\
$\PG(A_1)$:& Line on $S_{3,3}$& $\{1^{q+1}\}$\\
$\PG(A_2)$:& Point of rank two&  $\{2^1\}$\\
$\PG(A_3)$:& Line on $S_{2,3}$& $\{1^{q+1}\}$\\
\hline
 $o_5$& $ e_1 \otimes e_1 \otimes e_1+ e_2\otimes e_2 \otimes e_2  $&2\\
$\PG(A_1)$:& Secant line& $\{1^2,2^{q-1}\}$\\
$\PG(A_2)$:& Secant line&   $\{1^2,2^{q-1}\}$\\
\hline
 $o_6$ & $ e_1 \otimes e_1 \otimes e_1+ e_2\otimes (e_1 \otimes e_2 + e_2 \otimes e_1)  $&3\\
$\PG(A_1)$& Tangent line, contained in an $\langle S_{2,2}\rangle$& $\{1,2^{q}\}$\\
$\PG(A_2)$& Tangent line, contained in an $\langle S_{2,2}\rangle$&   $\{1,2^{q}\}$\\
\hline
 $o_7$& $ e_1 \otimes e_1 \otimes e_3+ e_2\otimes (e_1 \otimes e_1 + e_2 \otimes e_2)  $&3\\
$\PG(A_1)$& Tangent line, contained in an $\langle S_{2,3}\rangle$, & $\{1,2^{q}\}$\\
&not contained in an $\langle S_{2,2}\rangle$&\\
$\PG(A_2)$& Tangent line, not contained in an $\langle S_{2,2}\rangle$&   $\{1,2^{q}\}$\\
$\PG(A_3)$& Plane containing two lines of an $S_{2,2}$, &   $\{1^{2q+1},2^{q^2-q}\}$\\
\hline
 $o_8$& $ e_1 \otimes e_1 \otimes e_1+ e_2\otimes (e_2 \otimes e_2 + e_3 \otimes e_3)  $&3\\
$\PG(A_1)$& Tangent line, not contained in an $\langle S_{2,3}\rangle$, & $\{1,2,3^{q-1}\}$\\
&containing a point of rank two&\\
$\PG(A_2)$& Plane, containing a line and a point of $S_{2,3}$, &   $\{1^{q+2},2^{q^2-1}\}$\\
&not contained in an $\langle S_{2,2}\rangle$&\\
\hline
\end{tabular}

\begin{tabular}{|l|l|l|}
\hline
$o_9$& $ e_1 \otimes e_3 \otimes e_1+ e_2\otimes (e_1 \otimes e_1 + e_2 \otimes e_2 + e_3 \otimes e_3)  $& 4\\
$\PG(A_1)$& Tangent line, not contained in an $\langle S_{2,3}\rangle$, & $\{1,3^{q}\}$\\
&not containing a point of rank two&\\
$\PG(A_2)$& Plane, containing a line of $S_{2,3}$, &   $\{1^{q+1},2^{q^2}\}$\\
&not contained in an $\langle S_{2,2}\rangle$&\\
\hline
$o_{10}$& $ e_1\otimes (e_1\otimes e_1+ e_2\otimes e_2+u e_1\otimes e_2) +  
e_2\otimes (e_1\otimes e_2+v e_2\otimes e_1),$ & 3\\
 & $v\lambda^2+uv\lambda - 1 \neq 0$ for all $\lambda \in \F$& \\
$\PG(A_1)$& Line, constant rank two, contained in an $\langle S_{2,2}\rangle$& $\{2^{q+1}\}$\\
$\PG(A_2)$& Line, constant rank two, contained in an $\langle S_{2,2}\rangle$& $\{2^{q+1}\}$\\
\hline
$o_{11}$& $ e_1\otimes (e_1 \otimes e_1 + e_2 \otimes e_2)+ e_2\otimes (e_1 \otimes e_2 + e_2 \otimes e_3)$& 3\\
$\PG(A_1)$& Line, constant rank two, contained in an $\langle S_{2,3}\rangle$ & $\{2^{q+1}\}$ \\
 & but not in an $\langle S_{2,2}\rangle$&  \\
$\PG(A_2)$& Line, constant rank two, contained in an $\langle S_{2,3}\rangle$ & $\{2^{q+1}\}$\\
 & but not in an $\langle S_{2,2}\rangle$&  \\
$\PG(A_3)$& Plane in an $\langle S_{2,2}\rangle$, meeting in a conic & $\{1^{q+1},2^{q^2}\}$\\
\hline
$o_{12}$& $ e_1\otimes (e_1 \otimes e_1 + e_2 \otimes e_2)+ e_2\otimes (e_1 \otimes e_3 + e_3 \otimes e_2)$& 4\\
$\PG(A_1)$& Line, constant rank two, not contained in an $\langle S_{2,3}\rangle$& $\{2^{q+1}\}$\\
$\PG(A_2)$& Plane containing a line of $S_{2,3}$ & $\{1^{q+1},2^{q^2}\}$\\
\hline
$o_{13}$& $ e_1\otimes (e_1 \otimes e_1 + e_2 \otimes e_2)+ e_2\otimes (e_1 \otimes e_2 + e_3 \otimes e_3)$& 4\\
$\PG(A_1)$& Line, two points of rank $2$ & $\{2^2,3^{q-1}\}$\\
$\PG(A_2)$& Plane containing two points of $S_{2,3}$& $\{1^{2},2^{q^2+q-1}\}$\\
\hline
$o_{14}$& $ e_1\otimes (e_1 \otimes e_1 + e_2 \otimes e_2)+ e_2\otimes (e_2 \otimes e_2 + e_3 \otimes e_3)$& 3\\
$\PG(A_1)$& Line, three points of rank $2$ & $\{2^3,3^{q-2}\}$\\
$\PG(A_2)$& Plane containing three points of $S_{2,3}$ & $\{1^{3},2^{q^2+q-2}\}$\\
\hline
$o_{15}$& 
$ e_1\otimes (e+u e_1\otimes e_2) + e_2\otimes (e_1\otimes e_2+v e_2\otimes e_1),$& 4\\
 & $v\lambda^2+uv\lambda - 1 \neq 0$ for all
$\lambda \in \F$& \\
$\PG(A_1)$& Line, one point of rank $2$ & $\{2,3^{q}\}$\\
$\PG(A_2)$& Plane containing one point of $S_{2,3}$ & $\{1,2^{q^2+q}\}$\\
\hline
$o_{16}$& $ e_1\otimes (e_1 \otimes e_1 + e_2 \otimes e_2+e_3 \otimes e_3)+ e_2\otimes (e_1 \otimes e_2 + e_2 \otimes e_3)$& 4\\
$\PG(A_1)$& Line, one point of rank $2$ & $\{2,3^{q}\}$\\
$\PG(A_2)$& Plane containing one point of $S_{2,3}$ & $\{1,2^{q^2+q}\}$\\
\hline
$o_{17}$& $ e_1\otimes e+ 
e_2\otimes (e_1\otimes e_2 + e_2\otimes  e_3 + e_3\otimes (\alpha e_1 + \beta e_2 + \gamma e_3)),$ & 4 if $q\geq 3$\\
 & $\lambda^3+\gamma \lambda^2- \beta \lambda+ \alpha \neq 0$ for all $\lambda \in \F$ & 5 if $q=2$\\

$\PG(A_1)$& Line, constant rank $3$ & $\{3^{q+1}\}$\\
$\PG(A_2)$& Plane disjoint from $S_{2,3}$ & $\{2^{q^2+q+1}\}$\\
\hline
\end{tabular}

\vspace{1 cm}

{\bf Acknowledgement}
The authors would like to thank the members of the research group Algebra and Number Theory of Sabanci University for their kindness and hospitality, thanks to which the final part of this research was completed. Geometric insights and initial data was obtained with the help of the GAP-package FinInG (\cite{GAP}, \cite{FinInG}).

%%%%%%%%%%%%%%%%%%%%%%%%%%%%%%%%%%%%%%%%%%%%%%%%%%%%%%%%%%%%%%%%%%%%%%%%%%%%%
%%  BIBLIOGRAPHY 
%%%%%%%%%%%%%%%%%%%%%%%%%%%%%%%%%%%%%%%%%%%%%%%%%%%%%%%%%%%%%%%%%%%%%%%%%%%%%

\end{document}